\newcommand{\R}{\mathbb{R}}
\newcommand{\N}{\mathbb{N}}
\newcommand{\Z}{\mathbb{Z}}
\newcommand {\e}  {\varepsilon}
\newcommand\esup{\mathop{\mathrm{ess\,sup}}}
\newcommand\einf{\mathop{\mathrm{ess\,inf}}}
\newcommand {\cB}  {{\cal B}}
\newcommand {\cM}  {{\cal M}}
\newcommand {\J}{{\mathcal J}}
\newcommand{\cX}{{\mathcal X}}
\newcommand {\f}   {\frac}
\def\r{\rho}
\newcommand{\dis}{\displaystyle}
\newcommand{\beq}{\begin{equation}}
\newcommand{\beqa}{\begin{eqnarray}}
\newcommand{\bea} {\begin{array}{ll}}
\newcommand{\beqan}{\begin{eqnarray*}}
\newcommand{\eeq}{\end{equation}}
\newcommand{\eeqa}{\end{eqnarray}}
\newcommand{\eeqan}{\end{eqnarray*}}
\newcommand{\eea} {\end{array}}
\newtheorem{theorem}{Theorem}[section]
\newtheorem{lemma}[theorem]{Lemma}
\newtheorem{definition}[theorem]{Definition}
\newtheorem{corollary}[theorem]{Corollary}
\newtheoremstyle{remarkb}
	{}
	{}
	{\normalfont}
	{}
	{\bfseries}
	{}
	{ }
	{}
\theoremstyle{remarkb}
\newtheorem{remark}[theorem]{Remark}
\numberwithin{equation}{section}
\newcommand{\cqfd}{{ \hfill
                       {\unskip\kern 6pt\penalty 500
                       \raise -2pt\hbox{\vrule\vbox to 6pt{\hrule width 6pt
                       \vfill\hrule}\vrule} \par}   }}
\title{\Large \bf On numerical approximation  of the Hamilton-Jacobi-transport system arising in high frequency approximations}
\author{Yves Achdou~$^{\text{a}}$, Fabio Camilli~$^{\text{b}}$,  Lucilla Corrias~$^{\text{c}}$
}
\date{\today}
\begin{document}
\maketitle
\pagestyle{plain}
\pagenumbering{arabic}

\begin{abstract}
In the present article, we study the numerical approximation of a system of Hamilton-Jacobi and transport
equations arising in geometrical optics. We consider a semi-Lagrangian scheme.
We prove the well posedness of the discrete problem and the convergence of the approximated solution toward the viscosity-measure valued solution of the exact problem.
\end{abstract}

{\bf Key words.}  Hamilton-Jacobi equation; eikonal equation; transport equation; viscosity solutions; measure solutions; semiconcavity; numerical approximations; OSL condition.

{\bf AMS subject classification:} 65M12; 35F25; 35R05;49L25;

\section{Introduction}
\label{Sec:intro}
In this article, we consider the following   system
\begin{align}
&\partial_tu +H(x,t, \nabla u) = 0\,,  &\hbox{in } \R^d\times (0,T],\label{HJ}\\
&\partial_tm+\nabla\cdot(a(x,\nabla u)\,m)  = 0\,, &\hbox{in } \R^d\times (0,T],\label{CE}\\
&u(x,0)=u_0(x),\quad m(0)=m_0\,, &\hbox{in } \R^d\,,\label{IC}
\end{align}
of an Hamilton-Jacobi type equation (HJ) and a continuity equation (CE) describing the transport of the conserved measure $m_0$. Even if the vector field $a$ can be smooth (in the simplest and of reference case, $a(x,p)=p$),  the scalar function $u$ as a solution of \eqref{HJ} is intended in the viscosity sense. Therefore, $\nabla u$ is at most~$L^\infty$, even for smooth initial data $u_0$, and the regularity that we can expect for the vector field $a(\cdot,\nabla u)$ is the very weak regularity
\begin{equation}
a(\cdot\,,\nabla u(\cdot,\cdot))\in (L^\infty(\R^d\times[0,T]))^d\,.
\label{weakhyp}
\end{equation}
As a consequence, despite the fact that the (HJ) equation can be solved independently from (CE), system \eqref{HJ}-\eqref{CE} leads to  some interesting mathematical issues as well as numerical challenges.

More specifically, in order to obtain global solutions, a well adapted notion of solution for \eqref{CE} under hypothesis \eqref{weakhyp} is the notion of {\sl measure solutions} introduced by Poupaud-Rascle \cite{PR}. Indeed, the latter perfectly makes sense if the vector field $a(\cdot,\nabla u)$ satisfies a {\sl one-sided Lipschitz condition} (OSL), (see \eqref{OSL} below), and this condition can be obtained by the semiconcavity of the viscosity solution of \eqref{HJ}, at least in the reference case $a(x,p)=p$, or in the one dimensional case for a class of functions $a$.

Existence and uniqueness results for problem \eqref{HJ}-\eqref{CE}-\eqref{IC} in the framework of viscosity-measure solutions have been given for example in \cite{BK, St} and in the different framework of the one dimensional viscosity-duality solutions in \cite{GJ}. Therefore, our goal here is not to refine these previous results but to construct consistent numerical approximations.

The considered scheme is based on a semi-lagrangian discretization of \eqref{HJ}-\eqref{CE} for the time approximation coupled with a finite element discretization for the space variable. It requires a convex hamiltonian $H$.
 Since the semiconcavity of the initial data $u_0$ is conserved by the scheme, it is sufficient to require only a weak  (OSL) condition at the discrete level
 (automatically satisfied in the reference case $a(x,p)=p$) without semiconcavity requirement for the viscosity solution $u$.


The stability of the Filippov characteristics and of the corresponding measure solution of the transport equation~(CE) will be the key tool to prove the convergence of the numerical schemes. As a by-product, we obtain of course a new existence and uniqueness result for the viscosity-measure solution of \eqref{HJ}-\eqref{CE}-\eqref{IC}.

Regarding the applications of our numerical approximations of \eqref{HJ}-\eqref{CE}-\eqref{IC}, it is worth to recall that these types of systems arise for example in the semi-classical limit for the Schr{\"o}dinger equation \cite{Ca} and of the spinless Bethe-Salpeter equation (the relativistic Schr{\"o}dinger equation, \cite{BS}), or in  the high frequency approximation of the Helmholtz equation. In these cases, the hamiltonian $H$ and the vector field $a$ take the forms
\begin{align}
&H(x,t,p)=\f12|p|^2+V(x,t)\,,\quad a(x,p)=p\,,\quad (\hbox{\sl Schr{\"o}dinger\ and\ Helmholtz\ equation}),
\label{SH}\\
&H(x,t,p)=\left(\f{|p|^2}2+1\right)^{1/2}+V(x,t)\,,\quad a(x,p)=p\!\left(\f{|p|^2}2+1\right)^{-1/2}\!\!\!\!\!,\quad (\hbox{\sl Bethe-Salpeter\ equation}),
\label{BS}
\end{align}
where $V(x,t)$ is the potential (see \cite{BK,GJ,St} for a rapid derivation of \eqref{SH} and \eqref {BS}).\par
Coupling between first  or second  order Hamilton-Jacobi equations and transport equations has been also recently considered  in the framework of Mean Field Games theory \cite{ll}. In this case however, the system is given by a backward Hamilton-Jacobi equation and a forward transport equation with initial-terminal
conditions and   coupling terms in both  equations. The resulting system turns out to be completely different from the one  considered here.

The paper is organized as follows. Section \ref{Sec:preliminaries} is devoted to the preliminary definitions and known results concerning system \eqref{HJ}--\eqref{CE}--\eqref{IC}. In Section \ref{Sec:SLscheme} we construct a semi-lagrangian scheme for the time discretization of system \eqref{HJ}--\eqref{CE}--\eqref{IC} and we prove its convergence.  The corresponding fully discrete  scheme is given and analyzed in Section \ref{Sec:fully discrete}. 
 Finally, the appendix is devoted to the proof of some technical lemmas for the reader convenience.

Throughout the paper, we will denote by $C^0_0(\R^d)$ (resp. $C^0_c(\R^d)$) the space of continuous functions which tend to 0 at infinity (resp. with compact support); by  $\rho_\e$ a standard mollifier, i.e. $\r_\e(x)~\!=\!\frac{1}{\e^d}\rho(\frac{x}{\e})$, $\r\in C_c^\infty(\R^d)$, $\r\ge0$ and $\int_{\R^d}\r(x)dx=~1$; by $*$ the convolution with respect the space variable and by $C$ any numerical constant that can vary from line to line in the computations.

\section{Preliminaries : the viscosity-measure solutions}
\label{Sec:preliminaries}
As mentioned in the introduction, a solution of \eqref{HJ}--\eqref{CE} is intended in the viscosity sense for \eqref{HJ}, while in the sense of Poupaud-Rascle \cite{PR} for \eqref{CE}. Concerning the definition of viscosity solutions, we refer to the pioneering articles \cite{CL,CIL}. Here, for the reader's convenience, we recall the usual assumptions on $H$ and the consequent general existence and uniqueness result for \eqref{HJ} that we shall use in the sequel.

Let us define $Q_T:=\R^d\times [0,T]$, $T>0$, $d\ge1$, and let the hamiltonian $H$ satisfy the following:\hfill\break
$(\mathbf{H}_1)$ $H$ is uniformly continuous on $Q_T\times B(0,R)$ for any $R>0$ ;\hfill\break
$(\mathbf{H}_2)$ $H(x,t,0)$ is uniformly bounded : $\sup_{Q_T}|H(x,t,0)|\equiv\,M<+\infty$ ;\hfill\break
$(\mathbf{H}_3)$ there exists $\eta>0$ s.t. : $|H(x,t,p)-H(y,t,p)|\le\eta\,(1+|p|)|x-y|\,,\  \forall\ t\in[0,T]\,,\ \forall\ x,y,p\in\R^d$ .

\begin{theorem}[\cite{CL}]
Under hypothesis $(\mathbf{H}_1)$, $(\mathbf{H}_2)$ and $(\mathbf{H}_3)$, if in addition the initial data $u_0$ belongs to $(W^{1,\infty}\cap BUC)(\R^d)$, there exists a unique viscosity solution $u\in(W^{1,\infty}\cap BUC)(Q_T)$ of \eqref{HJ}, \eqref{IC}.
\label{th:HJ}
\end{theorem}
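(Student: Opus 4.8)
This is the classical Crandall--Lions well-posedness statement, and the plan is to recover it through the two standard pillars of the theory: a comparison principle (which gives uniqueness, and a posteriori the regularity bounds) together with Perron's method (which gives existence). Since the result is quoted from \cite{CL}, I only lay out the architecture and indicate where $(\mathbf H_1)$--$(\mathbf H_3)$ enter.

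\emph{Comparison principle.} Let $u$ be a bounded upper semicontinuous subsolution and $v$ a bounded lower semicontinuous supersolution of \eqref{HJ} with $u(\cdot,0)\le v(\cdot,0)$; the goal is $u\le v$ on $Q_T$. Because the space domain is unbounded I would double the variables and add small penalizations: for $\e,\alpha,\beta>0$,
\[
\Phi(x,y,t)=u(x,t)-v(y,t)-\frac{|x-y|^2}{2\e}-\alpha\big(\langle x\rangle+\langle y\rangle\big)-\frac{\beta}{T-t},\qquad \langle x\rangle:=\sqrt{1+|x|^2},
\]
where the $\beta$-term confines the maximum to the open slab and the $\alpha$-term confines it to a compact set (using that $u,v$ are bounded). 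At an interior maximizer $(\bar x,\bar y,\bar t)$ the viscosity inequalities for $u$ and $v$, with the common penalization momentum $p=(\bar x-\bar y)/\e$, give
\[
\frac{\beta}{(T-\bar t)^2}\le H\big(\bar y,\bar t,\,p-\alpha\nabla\langle\bar y\rangle\big)-H\big(\bar x,\bar t,\,p+\alpha\nabla\langle\bar x\rangle\big).
\]
Hypothesis $(\mathbf H_1)$ absorbs the $O(\alpha)$ perturbations of the momentum, and the structure condition $(\mathbf H_3)$ bounds the principal difference by $\eta(1+|p|)|\bar x-\bar y|\le\eta|\bar x-\bar y|+\eta|\bar x-\bar y|^2/\e$. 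Using the classical fact that $|\bar x-\bar y|^2/\e\to0$ as $\e\to0$ (uniformly for $\alpha$ small), together with $u(\cdot,0)\le v(\cdot,0)$ and the uniform continuity of the data to handle the case $\bar t=0$, one reaches a contradiction, so $u\le v$; letting $\alpha,\beta\to0$ closes the argument. Uniqueness in $BUC(Q_T)$ is the case of two solutions.

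\emph{Existence by Perron's method.} From $(\mathbf H_2)$ and $u_0\in W^{1,\infty}$ one checks that $u^{\pm}(x,t)=u_0(x)\pm Ct$ are respectively super- and sub-solutions for $C$ large, depending only on $M$ and $\|\nabla u_0\|_{L^\infty}$ (the non-differentiability of $u_0$ is harmless: the viscosity inequalities for such a Lipschitz barrier are tested only against smooth functions touching it, and the relevant sub/super-gradient of $u_0$ is then bounded by $\|\nabla u_0\|_{L^\infty}$, so $(\mathbf H_1)$--$(\mathbf H_2)$ control $H$). Hence the class of subsolutions $w$ with $u^-\le w\le u^+$ is nonempty, and $u:=\sup\{\,w:\ u^-\le w\le u^+,\ w\ \text{subsolution}\,\}$ is a viscosity solution by the usual Perron argument (see \cite{CIL}): its upper envelope is a subsolution, its lower envelope a supersolution, and a local bump construction, legitimized by the comparison principle just proved, rules out the failure of either inequality. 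In particular $u$ is bounded and $u(\cdot,0)=u_0$.

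\emph{Regularity $W^{1,\infty}\cap BUC$ and main obstacle.} For $h\in\R^d$ the translate $u(\cdot+h,\cdot)$ solves \eqref{HJ} with $H(x+h,t,p)$ in place of $H(x,t,p)$, and by $(\mathbf H_3)$ the two Hamiltonians differ by at most $\eta(1+|p|)|h|$; comparing $u(\cdot+h,\cdot)$ with $u(\cdot,\cdot)+\phi(t)|h|$, where $\phi'=\eta(1+\phi)$ and $\phi(0)=\|\nabla u_0\|_{L^\infty}$, a Gronwall estimate yields the spatial Lipschitz bound $\|\nabla u(\cdot,t)\|_{L^\infty}\le\phi(t)\le(1+\|\nabla u_0\|_{L^\infty})e^{\eta T}$. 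Then $\partial_t u=-H(x,t,\nabla u)$ is bounded in $L^\infty(Q_T)$ by $(\mathbf H_1)$--$(\mathbf H_2)$ (equivalently, compare $u(\cdot,\cdot+\tau)$ with $u(\cdot,\cdot)+C\tau$ after first getting $|u(x,\tau)-u_0(x)|\le C\tau$ from the barriers above), and together with the $L^\infty$ bound this gives $u\in(W^{1,\infty}\cap BUC)(Q_T)$. The one genuinely delicate point is the comparison principle on the unbounded domain $\R^d$: the localizing penalization must survive the limit compatibly with $(\mathbf H_3)$, and this is precisely where the \emph{linear}-in-$|p|$ growth of the spatial modulus in $(\mathbf H_3)$—rather than a bare modulus of continuity—is indispensable; once that is in place, the remaining steps are routine.
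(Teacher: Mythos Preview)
The paper does not prove this statement at all: Theorem~\ref{th:HJ} is simply quoted from \cite{CL} as a known result in the preliminaries, with no argument supplied. So there is no ``paper's proof'' to compare against; your sketch is an addition rather than a reproduction.

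That said, your outline is a correct and standard route to the result. Two remarks. First, a historical/bibliographic point: the original paper \cite{CL} obtains existence by vanishing viscosity (uniform $W^{1,\infty}$ estimates on $u^\e$ solving $\partial_t u^\e+H(x,t,\nabla u^\e)=\e\Delta u^\e$, then passage to the limit), not by Perron's method, which entered the theory later through Ishii; so your architecture is the modern one rather than the one actually in the cited reference. Second, the Lipschitz-in-$x$ step as you wrote it has a mild circularity: comparing $u(\cdot+h,\cdot)$ with $u(\cdot,\cdot)+\phi(t)|h|$ requires controlling $H(x+h,t,\nabla u)-H(x,t,\nabla u)$ via $(\mathbf H_3)$, which already involves $|\nabla u|$. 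This is genuinely resolvable---either by doubling the space variable and running the comparison for $u(x,t)-u(y,t)-\phi(t)|x-y|$ directly (so that the relevant momentum is $\phi(t)$, not $\nabla u$), or by deriving the bound first on smooth vanishing-viscosity approximations via the maximum principle and Gronwall, which is how \cite{CL} proceeds---but your sketch should flag which of these closes the loop.
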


As stated in Theorem \ref{th:HJ}, the expected regularity for $\nabla u$ is $L^\infty (Q_T)$. Therefore, the characteristics $X(t;x)$ associated to the conservative transport equation \eqref{CE} cannot be defined as classical or distributional solutions of the system below
\begin{equation}
\left\{
\begin{array}{l}
\partial_t X (t;x)=a(X(t;x),\nabla u(X(t;x),t))\\[5pt]
X(0;x)=x\,,\quad\quad x\in\R^d\,,
\end{array}
\right.
\label{flow}
\end{equation}
but have to be understood in a generalized sense. Once the flow $X(t;x)$ is uniquely defined and continuous on $[0,T]\times\R^d$, the natural definition of a solution of the conservation law \eqref{CE} with a given initial data $m_0$ belonging to the space of bounded measures $\cM_1(\R^d)$ is, see\cite{PR},
\begin{equation}
m(t)=X(t\,;\cdot)\,{\#}\,m_0\,.
\label{PRsol}
\end{equation}
Equation \eqref{PRsol} means that the measure solution $m(t)$ is the image (or the push-forward) of $m_0$ by  the flow $X(t\,;\cdot)$, i.e. for any Borel set $B\subset\R^d$,
\[
m(t)(B)=m_0(X(t\,;\cdot)^{-1}(B))=m_0(\{x\in\R^d:\, X(t;x)\in B\})\,,
\]
or equivalently
\[
\langle m(t),\phi\rangle= \langle m_0,\phi(X(t;\cdot))\rangle\,,\qquad
\text{for any } \phi\in C^0_0(\R^d)\,.
\]

It turns out that the definition of Filippov characteristics  is well suited and that with it,  \eqref{PRsol} perfectly make sense as soon as the characteristics are unique.
 A definition of Filippov characteristics solution of \eqref{flow} equivalent to the original one and easier to handle is given in \cite{PR},  as follows:
\begin{definition}[Filippov characteristics]
For any given $x\in\R^d$ and $T>0$, a generalized solution in the sense of Filippov of system \eqref{flow} is a continuous map $X(\cdot\,;x)\,:\,[0,T]\mapsto\R^d$ which satisfies
\[
\{\,\underline{\cM}(a(\cdot,\nabla u(\cdot,t))\cdot v)\}(X(t;x))\le\partial_t X(t;x)\cdot v\le\{\,\overline{\cM}(a(\cdot,\nabla u(\cdot,t))\cdot v)\}(X(t;x))\,,\quad a.e.\ t\in[0,T]\,,
\]
for any $v\in\R^d$, where
\begin{align*}
\{\,\underline{\cM}(a(\cdot,\nabla u(\cdot,t))\cdot v)\}(z):=\sup_{r>0}\left(\einf_{y\in B(z,r)}a(y,\nabla u(y,t))\cdot v\right)\\
\{\,\overline{\cM}(a(\cdot,\nabla u(\cdot,t))\cdot v)\}(z):=\inf_{r>0}\left(\esup_{y\in B(z,r)}a(y,\nabla u(y,t))\cdot v\right)\,.
\end{align*}
\label{def:Filippov}
\end{definition}
Then, assuming
\begin{itemize}
\item[$(\mathbf{A}_1)$] $|a|\in L^\infty_x(\R^d\,;\,L^\infty_{loc,\,p}(\R^d))$,
\end{itemize}
the vector field $a(\cdot,\nabla u)$ satisfies \eqref{weakhyp} and the Filippov characteristics of system \eqref{flow} exist. Moreover, if the following \emph{one-sided Lipschitz condition} (OSL) holds true
\begin{equation}
\left(a(x,\nabla u(x,t))-a(y,\nabla u(y,t))\right)\cdot(x-y)\le \gamma(t)|x-y|^2\quad
\hbox{a.e. $t\in[0,T]$, $x,y\in\R^d$}
\label{OSL}
\end{equation}
for a function $\gamma\in L^1([0,T])$, these characteristics are unique, the flow $(t,x)\mapsto X(t;x)$ is continuous on $[0,T]\times\R^d$, the map  from $\R^d$ to $\R^d$, $x\mapsto X(t;x)$ is onto; the following existence and uniqueness result of a viscosity-measure solution of \eqref{HJ}-\eqref{CE}-\eqref{IC} can be stated:
\begin{theorem}
Let $u_0\in(W^{1,\infty}\cap BUC)(\R^d)$ and $m_0\in\cM_1(\R^d)$. Assume $(\mathbf{H}_1)$-$(\mathbf{H}_2)$-$(\mathbf{H}_3)$ and $(\mathbf{A}_1)$. Then,  if the viscosity solution $u$ satisfies \eqref{OSL}, there exists a unique measure solution $m\in C^0([0,T];\cM_1(\R^d)-weak\,*)$ of \eqref{CE}-\eqref{IC} given by \eqref{PRsol}.
\label{th:HJ-CL}
\end{theorem}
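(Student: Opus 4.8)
The plan is to extract the theorem directly from the Filippov-flow theory recalled just before the statement: once the characteristic flow of \eqref{flow} is known to exist, to be unique and to be jointly continuous, the measure $m(t):=X(t;\cdot)\#m_0$ is \emph{defined} by \eqref{PRsol}, so the theorem reduces to checking the regularity class of $t\mapsto m(t)$ and to translating uniqueness of the flow into uniqueness of $m$.

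First I would fix the data. By Theorem \ref{th:HJ}, under $(\mathbf{H}_1)$--$(\mathbf{H}_3)$ and $u_0\in(W^{1,\infty}\cap BUC)(\R^d)$ there is a unique viscosity solution $u\in(W^{1,\infty}\cap BUC)(Q_T)$, so $\nabla u\in L^\infty(Q_T)^d$; together with $(\mathbf{A}_1)$ this yields \eqref{weakhyp}, i.e. $b:=a(\cdot,\nabla u)\in L^\infty(Q_T)^d$. Since by hypothesis $u$ satisfies the one-sided Lipschitz bound \eqref{OSL} with $\gamma\in L^1([0,T])$, the facts recalled before the statement apply: the Filippov characteristics of \eqref{flow} exist, are unique, the flow $(t,x)\mapsto X(t;x)$ is continuous on $[0,T]\times\R^d$, and $X(t;\cdot)$ is onto $\R^d$ for every $t$. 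I then set $m(t):=X(t;\cdot)\#m_0$ as in \eqref{PRsol}; by the very notion of measure solution adopted here this is the only candidate, so it remains to prove (i) $m(t)\in\cM_1(\R^d)$ with mass uniformly bounded in $t$; (ii) $t\mapsto m(t)$ is weak-$*$ continuous; (iii) $m$ is the unique measure solution of \eqref{CE}-\eqref{IC}.

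For (i): $X(t;\cdot)$ is continuous, hence Borel, so the push-forward of the bounded measure $m_0$ is a well-defined element of $\cM_1(\R^d)$ with $|m(t)|(\R^d)\le|m_0|(\R^d)$ (equality if $m_0\ge0$), whence $\sup_{t\in[0,T]}\|m(t)\|_{\cM_1}\le\|m_0\|_{\cM_1}$. For (ii): by \eqref{PRsol}, $\langle m(t),\phi\rangle=\langle m_0,\phi(X(t;\cdot))\rangle$ for every $\phi\in C^0_0(\R^d)$; since $C^0_0(\R^d)$ is separable, the uniformly bounded family $\{m(t):t\in[0,T]\}$ is metrizable for the weak-$*$ topology, so it suffices to prove sequential continuity. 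If $t_n\to t$ then $\phi(X(t_n;x))\to\phi(X(t;x))$ for every $x$ by continuity of the flow in time, and $|\phi(X(t_n;x))|\le\|\phi\|_\infty$, which is $|m_0|$-integrable; dominated convergence gives $\langle m(t_n),\phi\rangle\to\langle m(t),\phi\rangle$, hence $m\in C^0([0,T];\cM_1(\R^d)-weak\,*)$. (If one also wishes to record that this $m$ solves \eqref{CE} in the distributional sense, one regularizes $b$ by $b*\rho_\e$, notes that the smooth flows $X_\e$ solve the corresponding transport equation exactly and converge locally uniformly to $X$, and passes to the limit, which is the Poupaud--Rascle argument of \cite{PR}.) For (iii): any measure solution of \eqref{CE}-\eqref{IC} is, by definition, of the form $\widetilde X(t;\cdot)\#m_0$ for some Filippov flow $\widetilde X$ of \eqref{flow}, and uniqueness of the Filippov characteristics under \eqref{OSL} forces $\widetilde X\equiv X$, hence $\widetilde m\equiv m$. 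As announced in the introduction, this argument rests entirely on the well-posedness and stability of the Filippov flow and thereby simultaneously re-proves existence and uniqueness for \eqref{HJ}-\eqref{CE}-\eqref{IC}.

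The only genuinely delicate point is the one already folded into the recalled statement, namely uniqueness (and continuity) of the Filippov characteristics under \eqref{OSL}, and that is where I expect the main work to sit. Given two solutions $X_1,X_2$ of the inclusion in Definition \ref{def:Filippov} issued from the same point, one estimates $\frac{d}{dt}|X_1(t;x)-X_2(t;x)|^2$ by taking the test direction $v=X_1(t;x)-X_2(t;x)$ and combining the $\einf/\esup$-over-balls bounds of the inclusion with \eqref{OSL}; the technical step is to control the $\einf/\esup$ of $a(\cdot,\nabla u)\cdot v$ on shrinking balls around $X_1$ and $X_2$ by $\gamma(t)|X_1-X_2|^2$ up to an error that vanishes with the radii, after which $\frac{d}{dt}|X_1-X_2|^2\le 2\gamma(t)|X_1-X_2|^2$ and Gronwall together with $X_1(0;x)=X_2(0;x)$ finish it. The same computation run with the mollified fields $b*\rho_\e$ (whose OSL constant is again $\gamma$, mollification preserving \eqref{OSL}) gives the Lipschitz-in-$x$ estimate $|X(t;x)-X(t;y)|\le e^{\int_0^t\gamma(s)\,ds}|x-y|$ and, by a compactness/properness argument on $X_\e$, the joint continuity and surjectivity of the limit flow. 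I would either cite \cite{PR} for this block or record it as one of the appendix lemmas; everything else in the proof is bookkeeping with push-forward measures.
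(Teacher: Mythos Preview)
Your proposal is correct and is essentially the Poupaud--Rascle argument the paper has in mind, but you should be aware that the paper does not actually give a proof of this theorem. Theorem~\ref{th:HJ-CL} is stated in the preliminaries section as a known result: the text preceding it simply records that under $(\mathbf{A}_1)$ Filippov characteristics exist, that under \eqref{OSL} they are unique with the flow continuous and onto, and then the theorem is stated as an immediate consequence, with the implicit reference being \cite{PR} (and \cite{BK,St} for the coupled system). So your write-up is a fleshed-out version of what the paper only cites.

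One point worth noting: the paper also advertises, in the introduction, an \emph{indirect} proof of this existence--uniqueness result as a by-product of the convergence of the semi-Lagrangian scheme (Theorem~\ref{th:convergence}). That route is genuinely different from yours: instead of invoking the Filippov theory directly, one constructs the discrete flows $X^\e_h$ via the implicit Euler scheme \eqref{eq:Eulerimplicit}, proves their equicontinuity from the discrete (OSL$_h$) condition, and passes to the limit to \emph{produce} the Filippov characteristic $X$ together with the measure $m$. Your direct approach is shorter and self-contained once one grants \cite{PR}; the scheme-based approach has the advantage of being constructive and of simultaneously yielding the numerical approximation, which is the paper's actual goal.
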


In the reference case $a(x,p)=p$, as in \eqref{SH}, requiring the (OSL) condition \eqref{OSL} is equivalent to require that the viscosity solution $u$ be semiconcave with respect to~$x$,~i.e.
\begin{equation}
u(x+y,t)-2\,u(x,t)+u(x-y,t)\le \beta(t)\,|y|^2\,,
\label{semiconcavity}
\end{equation}
 for all $x,y\in\R^d$, $t\in[0,T]$ and for some $\beta\in L^1([0,T])$.
The semiconcavity property \eqref{semiconcavity} often characterizes the viscosity solution of Hamilton-Jacobi equations arising in control problems.
 However, for general vector fields $a$, it does not imply the necessary condition \eqref{OSL}. Indeed, for $u_\e=u*\rho_\e$, where $\rho_\e$ is the mollifier introduced above, the semiconcavity \eqref{semiconcavity} implies:
$\nu^T\,D^2u^\e(x,t)\,\nu\le\beta(t)$, for all $\nu\in\R^d$ s.t. $|\nu|=1$ and for all $(x,t)\in Q_T$, (see \cite{CS}).
  Moreover, $\nabla u^\e(x,t)\to\,\nabla u(x,t)$ as $\e\to0$, for a.a. $(x,t)\in Q_T$. Owing to this convergence property,
it is sufficient to obtain \eqref{OSL} for $u^\e$, whenever $a$ is at least continuous w.r.t. $p$.
 Assuming in addition that $a$ is differentiable w.r.t. $p$ and satisfies a one sided Lipschitz condition w.r.t. $x$ locally uniformly in $p$, we have
$$
a(x,p)-a(y,q)=\int_0^1D_pa(x,q+s(p-q))(p-q)\,ds+a(x,q)-a(y,q)\,,
$$
and the estimate
\begin{equation}
\begin{array}{l}
\left(a(x,\nabla u^\e(x,t))-a(y,\nabla u^\e(y,t))\right)\cdot(x-y)\le\\
\quad\quad\quad\int_0^1ds\int_0^1d\sigma\,(x-y)^T\,\Gamma(s)\,D^2_xu^\e(y+\sigma(x-y),t)(x-y)
+ C\,|x-y|^2\,,
\end{array}
\label{est:OSL}
\end{equation}
with $\Gamma(s)=D_pa(x,\nabla u^\e(y,t)+s(\nabla u^\e(x,t)-\nabla u^\e(y,t)))$. Therefore, the (OSL) condition follows easily from \eqref{semiconcavity}, if either $D_pa(x,p)=I$, as expected, or $d=1$ and
$\partial_pa(x,p)$ is non negative and upper bounded, as for \eqref{BS}. Apart from these two cases (already considered in \cite{BK}), the (OSL) condition is an assumption and has to be verified for the specific model at hand.

We conclude this section by listing the properties that the measure solution \eqref{PRsol} satisfies, see \cite{PR} :\hfill\break
(i) {\sl monotonicity} : $m_0\ge \nu_0\,\Rightarrow X(t\,;\cdot)\,{\#}\,m_0\ge X(t\,;\cdot)\,{\#}\,\nu_0$;\hfill\break
(ii) {\sl mass conservation} : $m(t)(\R^d)=m_0(\R^d)$;\hfill\break
(iii) {\sl contraction property} : $|m(t)|(B)\le |m_0|(B)$ , for any Borel set $B\subseteq\R^d$;\hfill\break
(iv) {\sl semi-group property} : $X(t-s\,;\cdot)\,{\#}\left(X(s\,;\cdot)\,{\#}\,m_0\right)=X(t\,;\cdot)\,{\#}\,m_0$;\hfill\break
(v) {\sl uniform compactness at infinity} : $\forall\ \e>0$ there exists $R>0$ s.t. $|m(t)|(\R^d\setminus B_R(0))\le\e$,  $\forall\ t\in[0,T]$.\hfill\break
It is worth to underline that the {\sl uniform compactness at infinity} property (v) is fundamental  to prove the convergence of approximate measure solutions toward the exact one in $C^0([0,T];\cM_1(\R^d)\,w-*)$.
%
\section{The semi-lagrangian scheme}
\label{Sec:SLscheme}
This section is devoted to the construction and the convergence analysis of a semi-lagrangian scheme for the time discretization of system \eqref{HJ}-\eqref{CE} over the time interval $[0,T]$. For an introduction to this class of schemes we refer to   \cite[Appendix B]{bcd} and \cite{CFN,FF,FF2,Gs}. To proceed, we need to refine the previous hypothesis on the hamiltonian $H$ and to add assumptions on the growth of $H$ w.r.t. $p$. Therefore, let us assume in this section $(\mathbf{H}_1)$, $(\mathbf{H}_3)$~and
\begin{itemize}
\item[$(\mathbf{H}_2')$] for any $R>0$,  $\sup_{Q_T\times\overline B(0,R)}|H(x,t,p)|\equiv\,M(R)<+\infty$;
\item [$(\mathbf{H}_4)$] $H$ is convex in $p$ and either linear at infinity (i) or  superlinear (ii):
\begin{itemize}
\item [(i)] $\exists\ K>0$ s.t. $\lim\limits_{|p|\to\infty}\f{H(x,t,p)}{|p|}= K$, uniformly in $(x,t)\in Q_T$, and there exists a positive function $\alpha\in L^\infty(\R^d)$ such that $|H(x,t,p)-H(y,t,p)|\le\alpha(p)|x-y|$ uniformly on $Q_T\times\R^d$\,;
\item [(ii)] $\lim\limits_{|p|\to\infty}\,\inf\limits_{Q_T}\,\f{H(x,t,p)}{|p|}=+\infty$ .
\end{itemize}
\end{itemize}
\begin{remark} It is worth noticing that the hamiltonians \eqref{SH} and \eqref{BS} satisfy all the required assumptions as soon as the potential $V$ is uniformly continuous and bounded over $Q_T$ and Lipschitz continuous in $x$ uniformly in $t$. The growth assumption $(\mathbf{H}_4)$-(i) can be relaxed to include also hamiltonians that are not uniformly positive at infinity. But we leave this generalization to the reader.
\end{remark}
%
\subsection{The semi-lagrangian scheme for the (HJ) equation}
\label{Sec:HJsemilagrangian}
Let $N\in\N$ be fixed and let $h=T\,N^{-1}$ be the associated time step used in the semi-discrete scheme to be defined. Since the hamiltonian $H$ is convex in $p$ and continuous (in fact lower semi-continuity would be sufficient \cite{Ro}), then $H(x,t,p)=H^{**}(x,t,p)$, where $H^*$ is the Legendre transform of $H$ with respect to $p$, i.e. $H^*(x,t,\xi)=\sup_{p\in\R^d}\{\xi\cdot p-H(x,t,p)\}$, and the (HJ) equation can be written as
\begin{equation}
\partial_tu(x,t)=\inf_{\xi\in\R^d}\{-\xi\cdot\nabla u(x,t)+H^*(x,t,\xi)\}.
\label{HJ**}
\end{equation}
Next, plugging the first order forward finite difference for the approximation of $\partial_tu$
$$
\partial_tu(x,t)\sim\f{u(x,t+h)-u(x,t)}h
$$
 and the first order approximation of the directional derivative $-\xi\cdot\nabla u$
\begin{equation}
-\xi\cdot\nabla u(x,t)\sim\f{u(x-\xi h,t)-u(x,t)}h\,,
\label{eq:OSDD}
\end{equation}
 into \eqref{HJ**}, one easily obtains the following first order approximation for~$u(x,t+h)$
$$
u(x,t+h)\sim\inf_{\xi\in\R^d}\left\{u(x-\xi\,h,t)+h\,H^*(x,t,\xi)\right\}\,.
$$
Let us observe that whenever the exact solution $u$ of (HJ) is semiconcave in $x$, then it possesses one-sided directional derivatives at any $x$ and in any direction, i.e. the limit as $h\searrow0$ of the right hand side of \eqref{eq:OSDD} always exists. Moreover, if $u$ is differentiable w.r.t. $x$, the previous limit coincides with the left hand side of~\eqref{eq:OSDD}. In other word, the approximation \eqref{eq:OSDD} is consistent.

For the semi-discrete scheme, it is sufficient to  inductively define the approximation $u^n=u^n(x)$ of the exact solution $u$ of (HJ) at time $t^n=n\,h$ for $n=0,\dots,N$, by
\begin{equation}
u^{n+1}(x)=\inf_{\xi\in\R^d}\left\{u^n(x-\xi h)+h\,H^*(x,t^n,\xi)\right\}\,,\quad x\in\R^d\,,
\label{HJn}
\end{equation}
with $u^0=u_0$, the initial data for (HJ) in \eqref{IC}. Next, we need to show that $u^n$ as given by \eqref{HJn} shares the  properties of $u_0$, in the same way as the exact viscosity solution $u$ does. These are well established facts in the context of the approximation of functions by the inf-convolution operator (see for example \cite{CS}). However, the difficulties here are on the one hand to show that the properties of the initial data $u_0$ are propagated to $u^n$ uniformly with respect to $n$ and $h$, and on the other hand to handle the dependency of $H$ on~$(x,t)$.

Let us define $Q_h:=\R^d\times\{0,\dots,N\}$ and the set of the arguments associated to the infimum in  \eqref{HJn}

\begin{equation}
A^n(x):=\arg\inf_{\xi\in\R^d}\left\{u^n(x-\xi h)+h\,H^*(x,t^n,\xi)\right\}\,,\quad (x,n)\in Q_h\,.
\label{arginf}
\end{equation}
%


%
\begin{lemma}[Properties of $H^*$]
Let $H$ satisfy $(\mathbf{H}_1)$, $(\mathbf{H}_2')$ and $(\mathbf{H}_3)$. If in addition $H$ satisfies $(\mathbf{H}_4)$-(i), then :
\begin{itemize}
\item [(a)] $H^*(x,t,\xi)=+\infty$ if $|\xi|>K$, for any $(x,t)\in Q_T$ , $H^*(\cdot,\cdot,0)\in L^\infty(Q_T)$ and $H^*$ is Lipschitz continuous in $x$ uniformly in $(\xi,t)\in \overline B(0,K)\times[0,T]$.
\end{itemize}
On the other hand, if in addition $H$ satisfies $(\mathbf{H}_4)$-(ii), then :
\begin{itemize}
\item [(b)] $H^*$ also satisfies $(\mathbf{H}_4)$-(ii), for any $r>0$ there exists $R=R(r)>0$ such that
\begin{equation}
H^*(x,t,\xi)=\max\limits_{p\in\overline B(0,R)}\{p\cdot\xi-H(x,t,p)\}\,,\quad\quad
\forall\ (x,t,\xi)\in Q_T\times\overline B(0,r)\,,
\label{eq:H^*}
\end{equation}
$H^*\in L^\infty(Q_T\times \overline B(0,r))$ and $H^*$ is Lipschitz continuous in $x$ uniformly in $(\xi,t)\in\overline B(0,r)\times[0,T]$.
\end{itemize}
\label{lm: H^*properties}
\end{lemma}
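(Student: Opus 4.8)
The two cases are proved by exploiting the definition of the Legendre transform together with the growth conditions in $(\mathbf{H}_4)$. For case (a), I would first show that $H^*(x,t,\xi)=+\infty$ when $|\xi|>K$: fix such a $\xi$ and take $p=s\xi/|\xi|$ with $s\to+\infty$ in the supremum defining $H^*$; since $(\mathbf{H}_4)$-(i) gives $H(x,t,p)/|p|\to K$ uniformly, the quantity $\xi\cdot p-H(x,t,p)=s|\xi|-H(x,t,s\xi/|\xi|)\sim s(|\xi|-K)\to+\infty$. The boundedness $H^*(\cdot,\cdot,0)\in L^\infty(Q_T)$ follows because $H^*(x,t,0)=\sup_p\{-H(x,t,p)\}=-\inf_p H(x,t,p)$, and $(\mathbf{H}_4)$-(i) with positivity of $\alpha$ (hence $H$ eventually positive at infinity) plus $(\mathbf{H}_2')$ gives a finite upper and lower bound uniform in $(x,t)$. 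For the Lipschitz continuity in $x$: for $|\xi|\le K$, the supremum defining $H^*(x,t,\xi)$ can be restricted to a ball $\overline B(0,R_0)$ in $p$ (by the same linear-growth argument, points with large $|p|$ cannot be near-maximizers once $|\xi|\le K$), and on that ball the Lipschitz-in-$x$ bound $|H(x,t,p)-H(y,t,p)|\le\alpha(p)|x-y|\le\|\alpha\|_{L^\infty}|x-y|$ from $(\mathbf{H}_4)$-(i) passes to the supremum: $|H^*(x,t,\xi)-H^*(y,t,\xi)|\le\|\alpha\|_{L^\infty}|x-y|$, uniformly in $(\xi,t)\in\overline B(0,K)\times[0,T]$.

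For case (b), under $(\mathbf{H}_4)$-(ii), I would start from the superlinearity: given $r>0$, for $|\xi|\le r$ and $|p|$ large we have $\xi\cdot p-H(x,t,p)\le |p|\bigl(r-\inf_{Q_T}H(x,t,p)/|p|\bigr)$, and the infimum-quotient tends to $+\infty$, so there is $R=R(r)$ beyond which $\xi\cdot p-H(x,t,p)<-M$ for any fixed threshold, while at $p=0$ one has $\xi\cdot 0-H(x,t,0)\ge -M$ by $(\mathbf{H}_2)$/$(\mathbf{H}_2')$; hence the supremum is attained in $\overline B(0,R)$, which is exactly \eqref{eq:H^*} (the sup becomes a max by continuity of $H$ in $p$ on the compact ball and $(\mathbf{H}_1)$). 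Then $H^*\in L^\infty(Q_T\times\overline B(0,r))$ follows from \eqref{eq:H^*} since $|p\cdot\xi-H(x,t,p)|\le Rr+M(R)$ on the relevant compact sets, using $(\mathbf{H}_2')$. The Lipschitz-in-$x$ estimate again comes from \eqref{eq:H^*}: for $x,y$ and $|\xi|\le r$, using a possibly enlarged common ball $\overline B(0,R')$ that works for near-maximizers at both $x$ and $y$, we get $|H^*(x,t,\xi)-H^*(y,t,\xi)|\le\sup_{|p|\le R'}|H(x,t,p)-H(y,t,p)|\le\eta(1+R')|x-y|$ by $(\mathbf{H}_3)$, uniformly in $(\xi,t)\in\overline B(0,r)\times[0,T]$. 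That $H^*$ inherits $(\mathbf{H}_4)$-(ii), i.e. superlinearity in $\xi$, is the standard duality fact that the Legendre transform of a function with at-most-linear... no: the conjugate of a function that is finite everywhere and superlinear is itself finite everywhere; superlinearity of $H^*$ in $\xi$ is equivalent to $H(x,t,\cdot)$ being finite and bounded on bounded sets, which holds by $(\mathbf{H}_2')$ — I would record this as a one-line convex-analysis remark, or alternatively derive $\lim_{|\xi|\to\infty}H^*(x,t,\xi)/|\xi|=+\infty$ directly by choosing $p=\sigma\xi/|\xi|$ for large fixed $\sigma$, giving $H^*(x,t,\xi)\ge\sigma|\xi|-M(\sigma)$.

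\textbf{Main obstacle.} The routine parts are the conjugacy manipulations; the one point needing care is \emph{uniformity} of the Lipschitz constant and of the truncation radius $R$ in $(x,t)$ (and in $\xi$ over the ball). The truncation argument must use the growth hypotheses in their \emph{uniform} form — $(\mathbf{H}_4)$-(i) states the limit is uniform in $(x,t)$, and $(\mathbf{H}_4)$-(ii) takes the infimum over $Q_T$ before the limit — so that the radius $R(r)$, resp. $R_0$, does not depend on the base point. Once that is in place, the Lipschitz bounds follow by taking suprema of the pointwise-in-$p$ estimates $(\mathbf{H}_3)$ or $(\mathbf{H}_4)$-(i) over a fixed compact $p$-ball, which is clean. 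I would also be slightly careful in case (a) that the near-maximizer ball in $p$ for $|\xi|\le K$ is indeed bounded: this uses that $\alpha>0$ forces $H(x,t,p)/|p|\to K$ with the approach not merely from below, together with $(\mathbf{H}_2')$, so that for $|p|$ large $\xi\cdot p-H(x,t,p)\le K|p|-H(x,t,p)$ stays bounded above and eventually drops below $H^*(x,t,0)\ge-M$; this is the slightly delicate estimate and I would isolate it as the first lemma in the proof.
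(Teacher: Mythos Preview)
Your treatment of case (b) is essentially the paper's: show the supremum localizes to $\overline B(0,R(r))$ via superlinearity compared against the value at $p=0$, then read off boundedness and the Lipschitz bound $\eta(1+R)|x-y|$ from $(\mathbf{H}_3)$. Your direct argument for the superlinearity of $H^*$ (take $p=\sigma\xi/|\xi|$) is the standard one the paper alludes to.

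In case (a) there is a genuine gap in your Lipschitz argument. You propose to first truncate the supremum to a ball $\overline B(0,R_0)$ and then use $\alpha(p)\le\|\alpha\|_{L^\infty}$ on that ball. The truncation step is exactly what \emph{fails} in the linear-growth regime, and the paper says so explicitly right after the lemma: ``in this case the supremum defining $H^*$ is not a priori reached. Therefore we cannot conclude as in case $(\mathbf{H}_4)$-(ii).'' Concretely, for $|\xi|=K$ one has $\xi\cdot p-H(x,t,p)\le |p|\bigl(K-H(x,t,p)/|p|\bigr)$, an indeterminate $\infty\cdot 0$; nothing forces this to drop below a fixed level. Your attempted justification via ``$\alpha>0$ forces $H(x,t,p)/|p|\to K$ not merely from below'' conflates two unrelated things: $\alpha$ is the Lipschitz modulus in the \emph{$x$}-variable and carries no information about growth in $p$.

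The fix is simply to drop the truncation. The reason $(\mathbf{H}_4)$-(i) carries the extra clause $\alpha\in L^\infty(\R^d)$ (stronger than $(\mathbf{H}_3)$, which only gives $\eta(1+|p|)$) is precisely so that the Lipschitz-in-$x$ bound is uniform over \emph{all} $p$: for any $\xi$ with $H^*(x,t,\xi)$ and $H^*(y,t,\xi)$ finite,
\[
|H^*(x,t,\xi)-H^*(y,t,\xi)|\le\sup_{p\in\R^d}|H(x,t,p)-H(y,t,p)|\le\|\alpha\|_{L^\infty}\,|x-y|,
\]
with no need to know where near-maximizers sit. Your final inequality is therefore correct, but for the wrong reason; remove the truncation discussion and the argument is clean and matches the paper's intent.
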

The proof of the Lemma above is given in the Appendix. Let us just point out here that we have chosen to take the hamiltonian $H$ to be Lipschitz continuous in $x$ uniformly in $p$ and $t$ when $H$ is linear in $p$ for $|p|\to+\infty$, in order to obtain the Lipschitz continuity of $H^*$ w.r.t. $x$. Indeed, in this case the supremum defining $H^*$ is not a priori reached. Therefore we cannot conclude as in case $(\mathbf{H}_4)$-(ii).

\begin{lemma}[Properties of $u^n$]
Let $u_0\in W^{1,\infty}(\R^d)$. Then, under hypothesis $(\mathbf{H}_1)$, $(\mathbf{H}_2')$, $(\mathbf{H}_3)$ and $(\mathbf{H}_4)$, $u^n$ is well defined over $Q_h$  by \eqref{HJn}, i.e. $A^n(x)$ is a non empty bounded set of $\R^d$ uniformly in $(x,n)\in Q_h$ and the infimum is a minimum. Moreover,  $u^n\in W^{1,\infty}(\R^d)$ and $\|u^n\|_{ W^{1,\infty}(\R^d)}$ is bounded uniformly in $n$ and, assuming $H^*\in L^\infty(Q_T\times\overline B(0,K))$ when the growth condition $(\mathbf{H}_4)$-$(i)$ is satisfied, $u^n$ is Lipschitz continuous with respect to $n$ uniformly in $x$, i.e. there exist three positive constants $C_0$, $C_1$ and $C_2$ independent of $h$ and $n$, s.t.
\begin{equation}
\|u^n\|_{L^\infty(\R^d)}\le C_0\,,\quad \|\nabla u^n\|_{L^\infty(\R^d)}\le C_1\quad\text{and}\quad
|u^n(x)-u^m(x)|\le C_2|n-m|\,h\,,
\label{propertiesun}
\end{equation}
for all $n,m=0,\cdots,N$, and $x\in\R^d$.
\label{lm: u^nproperties}
\end{lemma}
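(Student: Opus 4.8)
The plan is to establish all the claims of the lemma --- well-posedness of the scheme (nonemptiness and uniform boundedness of $A^n(x)$ in $\eqref{arginf}$, the infimum being a minimum), the two $L^\infty$ bounds on $u^n$ and $\nabla u^n$, and the Lipschitz-in-time estimate --- by a single induction on $n$, exploiting only the elementary structure of inf-convolutions together with the properties of $H^*$ recorded in Lemma~\ref{lm: H^*properties}. Throughout I would write $\Phi^{n}_x(\xi):=u^n(x-\xi h)+h\,H^*(x,t^n,\xi)$, so that $u^{n+1}(x)=\inf_{\xi}\Phi^n_x(\xi)$ and $A^n(x)=\arg\inf_\xi\Phi^n_x(\xi)$. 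Two quantities enter all the estimates: $M_0:=\sup_{Q_T}|H(\cdot,\cdot,0)|$, finite by $(\mathbf{H}_1)$--$(\mathbf{H}_2')$, which yields $H^*\ge -M_0$ on $Q_T\times\R^d$ (take $p=0$ in the definition of $H^*$), and $m^*:=\|H^*(\cdot,\cdot,0)\|_{L^\infty(Q_T)}$, finite by Lemma~\ref{lm: H^*properties} (it is precisely the standing assumption made there in case $(\mathbf{H}_4)$-(i)).

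The inductive step is organized as follows. Assume $u^n\in W^{1,\infty}(\R^d)$ with the accumulating bounds $\|u^n\|_{L^\infty}\le\|u_0\|_{L^\infty}+nh\max(m^*,M_0)$ and (in case $(\mathbf{H}_4)$-(i)) $\|\nabla u^n\|_{L^\infty}\le\|\nabla u_0\|_{L^\infty}+nhL$ known up to step $n$. First, well-posedness: $\Phi^n_x$ is lower semicontinuous in $\xi$ (since $u^n$ is continuous and $H^*(x,t^n,\cdot)$, a Legendre conjugate, is convex and l.s.c.) and has bounded sublevel sets --- in case $(\mathbf{H}_4)$-(i) because $\Phi^n_x(\xi)=+\infty$ for $|\xi|>K$ by Lemma~\ref{lm: H^*properties}(a), and in case $(\mathbf{H}_4)$-(ii) because the uniform superlinearity of $H^*$ dominates the linear term coming from $u^n(x-\xi h)\ge u^n(x)-\|\nabla u^n\|_{L^\infty}|\xi|h$. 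Comparing $\Phi^n_x(\xi)$ with $\Phi^n_x(0)\le u^n(x)+h\,m^*$ then confines $A^n(x)$ to a ball $\overline B(0,\rho)$ whose radius depends only on $\|\nabla u^n\|_{L^\infty}$, $m^*$ and the superlinearity profile of $H^*$ ($\rho=K$ in case (i)). Hence the infimum is attained, $A^n(x)$ is nonempty and compact, and $u^{n+1}(x)\in\R$ (it lies between $-\|u^n\|_{L^\infty}-hM_0$ and $u^n(x)+hm^*$).

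Next, the $L^\infty$ and Lipschitz-in-$x$ bounds. Using $\xi=0$ as a competitor gives $u^{n+1}(x)\le u^n(x)+h\,m^*$, hence $\|u^{n+1}\|_{L^\infty}\le\|u^n\|_{L^\infty}+h\max(m^*,M_0)$ and, after $N$ steps, $\|u^n\|_{L^\infty}\le C_0:=\|u_0\|_{L^\infty}+T\max(m^*,M_0)$; picking any $\xi^\ast\in A^n(x)\subseteq\overline B(0,\rho)$ and using $H^*\ge -M_0$ gives the reverse one-sided bound $u^{n+1}(x)\ge u^n(x)-\|\nabla u^n\|_{L^\infty}\rho h-hM_0$. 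For the gradient I would use the standard inf-convolution comparison: for $x,y\in\R^d$ and $\xi\in A^n(y)$,
\begin{align*}
u^{n+1}(x)-u^{n+1}(y)&\le u^n(x-\xi h)-u^n(y-\xi h)+h\bigl(H^*(x,t^n,\xi)-H^*(y,t^n,\xi)\bigr)\\
&\le\bigl(\|\nabla u^n\|_{L^\infty}+hL\bigr)|x-y|,
\end{align*}
where $L$ is the Lipschitz-in-$x$ modulus of $H^*$ on $\overline B(0,\rho)\times[0,T]$ from Lemma~\ref{lm: H^*properties}; exchanging $x$ and $y$ shows $u^{n+1}\in W^{1,\infty}(\R^d)$ with $\|\nabla u^{n+1}\|_{L^\infty}\le\|\nabla u^n\|_{L^\infty}+hL$. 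Finally, combining the two one-sided bounds on $u^{n+1}(x)-u^n(x)$ gives $|u^{n+1}(x)-u^n(x)|\le C_2 h$ with $C_2:=\max(m^*,C_1\rho+M_0)$, and summing over consecutive indices yields the third estimate in \eqref{propertiesun}.

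The one genuinely delicate point is the uniformity in $n$ and $h$ of the gradient bound --- equivalently, of the radius $\rho$ confining $A^n(x)$ --- in the superlinear case $(\mathbf{H}_4)$-(ii), where $L=L(\rho)$ grows with $\rho$ and $\rho$ grows with $\|\nabla u^n\|_{L^\infty}$, so the recursion $\|\nabla u^{n+1}\|_{L^\infty}\le\|\nabla u^n\|_{L^\infty}+hL(\rho_n)$ is not obviously stable. I would close it by showing, via $(\mathbf{H}_3)$ and the representation \eqref{eq:H^*} of Lemma~\ref{lm: H^*properties}(b) (which bounds the maximizer $|p|\le R(\rho)$, whence $L(\rho)\le\eta(1+R(\rho))$), that $L(\rho_n)$ grows at most linearly in $\|\nabla u^n\|_{L^\infty}$ --- this is where the precise structure of $(\mathbf{H}_4)$-(ii) combined with $(\mathbf{H}_3)$ is used, and it holds in particular for \eqref{SH} and \eqref{BS}, for which $L$ is even independent of $\rho$. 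The recursion then takes the Grönwall form $\|\nabla u^{n+1}\|_{L^\infty}\le(1+Ch)\|\nabla u^n\|_{L^\infty}+Ch$, giving $\|\nabla u^n\|_{L^\infty}\le C_1:=e^{CT}(\|\nabla u_0\|_{L^\infty}+1)$ for $t^n\le T$ (subdividing $[0,T]$ into finitely many intervals if it is convenient to normalize $Ch$); in case $(\mathbf{H}_4)$-(i) this difficulty is absent since $A^n(x)\subseteq\overline B(0,K)$ unconditionally. Once $C_1$, hence $\rho$, is fixed, all the estimates above hold with constants independent of $n$ and $h$, and the induction closes.
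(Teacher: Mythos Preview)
Your proof follows essentially the same induction-on-$n$ / inf-convolution-comparison strategy as the paper's: bound $u^{n+1}$ from above by testing $\xi=0$, from below via $H^*\ge -M$, confine $A^n(x)$ via Lemma~\ref{lm: H^*properties}, and obtain the Lipschitz-in-$x$ estimate by evaluating at $x$ with a minimizer chosen at $y$ (the paper does the symmetric choice, but it is the same computation).

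The only substantive difference is your explicit treatment of the uniformity in $n,h$ of the confining radius $\rho_n$ (equivalently of $\|\nabla u^n\|_\infty$) in case $(\mathbf{H}_4)$-(ii). The paper simply asserts, in a parenthesis and without argument, that the radius $R^n$ is ``increasing w.r.t.\ $n$ but upper bounded uniformly in $n$ and $h$'', whereas you attempt a Gr\"onwall closure via $L(\rho)\le\eta(1+R(\rho))$. Note, however, that your closure requires $L(\rho_n)$ to grow at most linearly in $\|\nabla u^n\|_\infty$, and this is \emph{not} a consequence of $(\mathbf{H}_3)$ and $(\mathbf{H}_4)$-(ii) alone: $R(\rho)$ can grow superlinearly in $\rho$ for slowly superlinear $H$. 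It does hold for \eqref{SH} (where $L$ is independent of $\rho$), and \eqref{BS} falls under case $(\mathbf{H}_4)$-(i) anyway, so the issue is moot for the paper's applications. In short, on this one point both the paper's proof and yours are somewhat informal under the bare hypotheses; your flagging it as the delicate step is well placed, but the linear-growth claim should be stated as an additional assumption rather than as a consequence of $(\mathbf{H}_3)$--$(\mathbf{H}_4)$-(ii).
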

\begin{proof}
We suppose that $u^n\in W^{1,\infty}(\R^d)$ with
$$
\|u^n\|_{L^\infty(\R^d)}\le C_0^n\quad\text{and}\quad \|\nabla u^n\|_{L^\infty(\R^d)}\le C_1^n\,,
$$
where $C_0^n$ and $C_1^n$ are independent of $h$. Then, the proof will follow by induction on $n$.

It follows immediately by Lemma \ref{lm: H^*properties} that $u^{n+1}$ is upper bounded since
$$
u^{n+1}(x)\le u^n(x)+h\,H^*(x,t^n,0)\le C_0^n+h\,\|H^*(\cdot,\cdot,0)\|_{L^\infty(Q_T)},
\quad\forall x\in\R^d\,.
$$
Moreover, $u^{n+1}$ is obviously lower bounded since $u^n$ is bounded and $H^*(x,t,\xi)\ge -M$, for any $(x,t,\xi)\in Q_T\times\R^d$, where $M \equiv\sup_{Q_T}|H(x,t,0)|$, so that
$$\|u^{n+1}\|_{L^\infty(\R^d)}\le C_0^n+h\max\{M,\,\|H^*(\cdot,\cdot,0)\|_{L^\infty(Q_T)}\}\,.$$

Next, if $H$ satisfies $(\mathbf{H}_4)$-(i), $A^n(x)\subset\overline B(0,K)$ for any $x\in\R^d$ and the infimum in \eqref{HJn} is attained due to the continuity of $u^n(x-\xi h)+h\,H^*(x,t^n,\xi)$ w.r.t. $\xi$.

On the other hand, if $H$ satisfies $(\mathbf{H}_4)$-(ii), since $u^n$  and $H^*(x,t,0)$ are bounded and $H^*$ is superlinear, there exists $R^n>0$ (increasing w.r.t. $n$ but upper bounded uniformly in $n$ and $h$) s.t. $A^n(x)\subset\overline B(0, R^n)$ for any $x\in\R^d$ and again the infimum in \eqref{HJn} is attained.

Let us prove now that $u^{n+1}$ is Lipschitz continuous. We have for any $x,y\in\R^d$ and for $\alpha^n(x)\in A^n(x)$
$$
u^{n+1}(x)=u^n(x-h\,\alpha^n(x))+h\,H^*(x,t^n,\alpha^n(x))
$$
and
$$
u^{n+1}(y)\le u^n(y-h\,\alpha^n(x))+h\,H^*(y,t^n,\alpha^n(x))\,.
$$
Therefore,
$$
u^{n+1}(y)-u^{n+1}(x)\le\left[\|\nabla u^n\|_\infty+h\,L_{H^*}\right]|x-y|
\le\left[C_1^n+h\,L_{H^*}\right]|x-y|\,,
$$
where $L_{H^*}$ is the Lipschitz constant of $H^*$ w.r.t. $x$, and the statement follows exchanging the role of $x$ and $y$. Consequently, $u^{n+1}\in W^{1,\infty}(\R^d)$.

It remains to prove that $u^{n}$ is Lipschitz continuous with respect to $n$. As before, we have for any $x\in\R^d$ and the corresponding argument $\alpha^n(x)\in A^n(x)$
\begin{align}
\nonumber
|u^{n+1}(x)-u^n(x)|&=|u^n(x-h\,\alpha^n(x))-u^n(x+h\,H^*(x,t^n,\alpha^n(x))|
\\ \nonumber
&\le\|\nabla u^n\|_\infty\,h\,|\alpha^n(x)|+h|H^*(x,t^n,\alpha^n(x))|\le C_1^nh\,|A^n|+h|H^*(x,t^n,\alpha^n(x))|\,,
\end{align}
and the proof is completed, thanks to the uniform boundedness of $A^n$ and $H^*$.
\end{proof}
%


Let us observe that when the hamiltonian $H$ grows linearly at infinity, i.e. when the growth condition $(\mathbf{H}_4)$-$(i)$ is satisfied, then $H^*$ is not necessarily upper bounded. Therefore, the additional hypothesis $H^*\in L^\infty(Q_T\times\overline B(0,K))$ in Lemma \ref{lm: u^nproperties} is necessary. It is easily seen that this additional hypothesis is satisfied by the hamiltonian \eqref{BS}.

Finally, in order to obtain semiconcavity, we need the following semiconcavity hypothesis on $H^*$.

\begin{itemize}
\item[$(\mathbf{H}_5)$] $H^*=H^*(x,t,\xi)$ is semiconcave in $x$ uniformly in $t$ and $\xi$, with semiconcavity constant
$C_{conc}^{H^*}$.
\end{itemize}
\begin{lemma}[Semiconcavity]
Let $u_0\!\in\! W^{1,\infty}(\R^d)$ be semiconcave. Then, under hypothesis $(\mathbf{H}_1)$, $(\mathbf{H}_2')$, $(\mathbf{H}_3)$, $(\mathbf{H}_4)$ and $(\mathbf{H}_5)$, $u^n$ is also semiconcave uniformly in $h$ and~$n$.
\label{lm:semiconcavity}
\end{lemma}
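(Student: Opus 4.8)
The plan is to prove by induction on $n$ that $u^n$ satisfies the semiconcavity estimate
\[
u^n(x+y) - 2u^n(x) + u^n(x-y) \le \beta_n\,|y|^2, \qquad x,y\in\R^d,
\]
with a constant $\beta_n$ that can be bounded uniformly in $n$ and $h$. The base case $n=0$ holds by the assumed semiconcavity of $u_0$, with $\beta_0$ its semiconcavity constant. For the inductive step, I would fix $x,y\in\R^d$ and use the minimizer $\alpha^n(x)\in A^n(x)$ (which exists and lies in a bounded set uniformly in $(x,n)$, by Lemma~\ref{lm: u^nproperties}) to write
\[
u^{n+1}(x) = u^n\big(x - h\,\alpha^n(x)\big) + h\,H^*\big(x,t^n,\alpha^n(x)\big),
\]
while for the two points $x\pm y$ I would use the \emph{same} argument $\xi = \alpha^n(x)$ as a (suboptimal) competitor in the infimum defining $u^{n+1}(x\pm y)$:
\[
u^{n+1}(x\pm y) \le u^n\big(x\pm y - h\,\alpha^n(x)\big) + h\,H^*\big(x\pm y,t^n,\alpha^n(x)\big).
\]

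Adding the two inequalities for $x\pm y$ and subtracting twice the equality at $x$, the differences split into two pieces. The first piece,
\[
u^n\big((x - h\alpha^n(x)) + y\big) - 2u^n\big(x - h\alpha^n(x)\big) + u^n\big((x - h\alpha^n(x)) - y\big),
\]
is controlled by the inductive hypothesis applied at the shifted point, giving $\beta_n\,|y|^2$. The second piece,
\[
h\Big[H^*(x+y,t^n,\alpha^n(x)) - 2H^*(x,t^n,\alpha^n(x)) + H^*(x-y,t^n,\alpha^n(x))\Big],
\]
is controlled by hypothesis $(\mathbf{H}_5)$, the semiconcavity of $H^*$ in $x$ uniformly in $t$ and $\xi$, giving $h\,C_{conc}^{H^*}\,|y|^2$. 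Combining, $\beta_{n+1} \le \beta_n + h\,C_{conc}^{H^*}$, hence by discrete Gronwall $\beta_n \le \beta_0 + n h\,C_{conc}^{H^*} \le \beta_0 + T\,C_{conc}^{H^*}$, which is the desired uniform bound. Writing $\beta(t) \equiv \beta_0 + T\,C_{conc}^{H^*}$ (a constant, hence in $L^1([0,T])$) then puts the estimate in the form \eqref{semiconcavity}.

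The step I expect to be the main (though modest) obstacle is making the competitor argument fully rigorous: one must check that $\alpha^n(x)$, which minimizes the functional based at $x$, is an \emph{admissible} point of the infimum defining $u^{n+1}(x\pm y)$. This is immediate because the infimum in \eqref{HJn} is over all $\xi\in\R^d$, but one should also note that the relevant quantities ($u^n(\cdot - h\alpha^n(x))$ at the three shifted points, and $H^*(\cdot,t^n,\alpha^n(x))$ at $x, x\pm y$) are finite: the former by $u^n\in W^{1,\infty}(\R^d)$, and the latter because $\alpha^n(x)$ ranges in a fixed bounded ball $\overline B(0,\bar R)$ on which $H^*$ is finite (indeed in $L^\infty$) by Lemma~\ref{lm: H^*properties}. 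A secondary point worth a sentence is that the bound on $\beta_{n+1}$ does not depend on which minimizer $\alpha^n(x)$ is selected, so no measurable-selection issue arises; the estimate holds pointwise in $x$ for every choice. Apart from this bookkeeping, the argument is the standard one showing that inf-convolution preserves semiconcavity, adapted to carry the constant uniformly through the $N = T/h$ iterations.
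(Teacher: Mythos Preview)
Your proof is correct and follows exactly the same approach as the paper: induction on $n$, using the minimizer $\alpha^n(x)$ at the center point as a competitor at $x\pm y$, splitting the second difference into a $u^n$-part bounded by the inductive hypothesis and an $H^*$-part bounded by $(\mathbf{H}_5)$, and iterating the recursion $\beta_{n+1}\le\beta_n+h\,C_{conc}^{H^*}$ to get $\beta_n\le\beta_0+T\,C_{conc}^{H^*}$. Your additional remarks on admissibility and finiteness are sound bookkeeping that the paper leaves implicit.
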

\begin{proof}
We proceed as in Lemma \ref{lm: u^nproperties} supposing that $u^n$ is semiconcave with semiconcavity constant $C_{conc}^n$ . Then, for any $x\in\R^d$, any corresponding argument $\alpha^n(x)\in A^n(x)$ and any $y\in\R^d$, we have that
\begin{align}
&u^{n+1}(x+y)-2u^{n+1}(x)+u^{n+1}(x-y)
\nonumber\\
&\qquad\le u^{n}(x+y-h\,\alpha^n(x))-2\,u^{n}(x-h\,\alpha^n(x))+u^{n}(x-y-h\,\alpha^n(x))
\nonumber\\
&\qquad\qquad+h\left[H^*(x+y,t^n,\alpha^n(x))-2\,H^*(x,t^n,\alpha^n(x))+H^*(x-y,t^n,\alpha^n(x))\right]
\nonumber\\
&\qquad\le\left[C_{conc}^n+h\,C_{conc}^{H^*}\right]|y|^2\,.
\nonumber
\end{align}
Hence, $u^{n+1}$ is semiconcave with $C_{conc}^{n+1}\le C_{conc}^n+h\,C_{conc}^{H^*}$, and by induction $u^n$ is semiconcave for all $n=1,\dots,N$ with : $C_{conc}^{n}\le C_{conc}^{u_0}+T\,C_{conc}^{H^*}$\,.

\end{proof}
\begin{remark}
Assumption $(\mathbf{H}_5)$ is necessary since the hamiltonian $H$ depends on $x$ and $t$. It can be satisfied under regularity hypothesis on $H$. In the case of the hamiltonians \eqref{SH} and \eqref{BS}, $(\mathbf{H}_5)$ is satisfied if the potential $V$ is convex in $x$ uniformly in $t$.
\end{remark}
%
\subsection{The semi-lagrangian scheme for the (CE) equation}
\label{Sec:CLsemilagrangian}
Now we can proceed to the construction of a semi-discrete scheme for the transport equation \eqref{CE}, replacing the time continuous flow $X(t;x)$ with a time discrete one.
Set $u^n_\e=u^n*\rho_\e$ and let us consider the following implicit backward Euler scheme
\begin{equation}
\left\{
\begin{array}{l}
X^{n+1}=X^n+h\,a(X^{n+1},\nabla u^{n+1}_\e(X^{n+1}))\,,\quad \quad n=0,\cdots,N-1,\\
X^0=x\,,\quad\quad x\in\R^d\,.
\end{array}
\right.
\label{eq:Eulerimplicit}
\end{equation}

Next, given the initial measure $m_0$ and replacing \eqref{flow} with \eqref{eq:Eulerimplicit}, we define the semi-discrete approximation of the measure solution \eqref{PRsol} as
\begin{equation}
m^n=X^n(\cdot)\,{\#}\,m_0\,,\quad\quad \text{for } n=0,\dots,N\,,
\label{mn}
\end{equation}
i.e. for any Borel set $B\subset\R^d$
\[
m^n(B)=m_0((X^n)^{-1}(B))=m_0(\{x\in\R^d:\, X^n(x)\in B\})\,,
\]
or equivalently
$$
\langle m^n,\phi\rangle= \langle m_0,\phi(X^n(\cdot))\rangle\qquad \text{for any $\phi\in C^0_0(\R^d)$}.
$$

Above, the dependence of $X^n$ on $x$ and $\e$ and consequently of $m^n$ on $\e$  has been skipped to simplify the notations, but it will be made explicit when necessary below.

It is worth noticing that it is not possible to define the semi-discrete flow \eqref{eq:Eulerimplicit} with $\nabla u^{n+1}$ instead of the gradient of the regularized $u^{n+1}_\e$, since we need $X^n$ to be defined for all $(x,n)\in Q_h$. Moreover, it is also not possible to use an explicit forward Euler scheme to define the trajectories $X^n$, because the discrete (OSL) condition \eqref{OSLh} does not provide the necessary
equicontinuity of these trajectories, in contrast with the implicit one \eqref{eq:Eulerimplicit}, (see Lemma \ref{lm:Xh} and Remark \ref{rm:explicit scheme} below).

The following Lemma gives us the existence, uniqueness and the regularity properties of $X^n$ necessary to the  well posedness of $m^n$ and to pass to the limit as $h$ and $\e$ go to 0. Owing to the implicit nature of the Euler scheme \eqref{eq:Eulerimplicit}, we have to assume an upper bound on the space-time step $h$.
\begin{lemma}[Properties of $X^n$]
Let $u_0\in W^{1,\infty}(\R^d)$ and assume $(\mathbf{H}_1)$, $(\mathbf{H}_2')$, $(\mathbf{H}_3)$, $(\mathbf{H}_4)$ and~$(\mathbf{A}_1)$. Then, if in addition $a\in (C^0(\R^d\times\R^d))^d$,  the ($\hbox{OSL}_h$) condition
\begin{equation}
\left(a(x,\nabla u^n_\e(x))-a(y,\nabla u^n_\e(y))\right)\cdot(x-y)\le C|x-y|^2,\quad x,y\in\R^d\,,n=0,\cdots,N\,,
\label{OSLh}
\end{equation}
is satisfied, with constant $C$ independent of $\e$ and $h$, and $C\,h<1$, the solution of \eqref{eq:Eulerimplicit} is univocally defined. Moreover, the flow $(n,x)\mapsto X^n(x)$ is locally bounded and Lipschitz continuous w.r.t.~$(x,n)\in~Q_h$, uniformly in $\e$.
\label{lm:Xh}
\end{lemma}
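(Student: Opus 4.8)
The plan is to treat the three assertions — existence/uniqueness of $X^{n+1}$ given $X^n$, local boundedness of $(n,x)\mapsto X^n(x)$, and Lipschitz continuity in $(x,n)$ — in turn, exploiting the implicit structure of \eqref{eq:Eulerimplicit} and the (OSL$_h$) condition \eqref{OSLh}. For the first point, fix $n$ and $y\in\R^d$ and consider the map $\Phi(z):=y+h\,a(z,\nabla u^{n+1}_\e(z))$; a fixed point of $\Phi$ is exactly a solution $X^{n+1}=z$ of the implicit step with $X^n=y$. The key observation is that, by \eqref{OSLh}, the field $F(z):=a(z,\nabla u^{n+1}_\e(z))$ satisfies $(F(z_1)-F(z_2))\cdot(z_1-z_2)\le C|z_1-z_2|^2$, hence $z\mapsto z-hF(z)$ is \emph{strongly monotone} when $Ch<1$:
\[
\bigl((z_1-hF(z_1))-(z_2-hF(z_2))\bigr)\cdot(z_1-z_2)\ge(1-Ch)|z_1-z_2|^2 .
\]
Since moreover $F$ is continuous (because $a\in(C^0)^d$ and $\nabla u^{n+1}_\e$ is continuous, indeed smooth, the mollification being the whole point of introducing $u^n_\e$) and bounded (by $(\mathbf{A}_1)$ and the uniform bound on $\|\nabla u^{n+1}_\e\|_\infty$ from Lemma \ref{lm: u^nproperties}), the map $z\mapsto z-hF(z)$ is a continuous, coercive, strictly monotone operator from $\R^d$ to $\R^d$; by the Minty–Browder theorem (or, more elementarily, Brouwer plus the strict monotonicity for uniqueness) it is a bijection of $\R^d$. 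This gives, for every $y$, a unique $z=X^{n+1}$, and iterating from $X^0=x$ defines $X^n(x)$ uniquely for all $(x,n)\in Q_h$.

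For the local bound, I would estimate $|X^{n+1}|$ directly: from \eqref{eq:Eulerimplicit}, $|X^{n+1}|\le|X^n|+h\|a(\cdot,\nabla u^{n+1}_\e(\cdot))\|_\infty\le|X^n|+h\,C_a$, where $C_a$ depends only on $\sup_n\|\nabla u^n_\e\|_\infty\le C_1$ and the local sup of $|a|$ on $\R^d\times\overline B(0,C_1)$ from $(\mathbf{A}_1)$; summing yields $|X^n(x)|\le|x|+Tb C_a$, uniformly in $h,\e,n$. For Lipschitz continuity in $x$, fix $n$ and write the implicit relation for $X^{n+1}(x)$ and $X^{n+1}(x')$ with $X^n(x)$, $X^n(x')$ on the right; subtracting, pairing with $X^{n+1}(x)-X^{n+1}(x')$, and using the strong monotonicity displayed above gives
\[
(1-Ch)\,|X^{n+1}(x)-X^{n+1}(x')|^2\le \bigl(X^n(x)-X^n(x')\bigr)\cdot\bigl(X^{n+1}(x)-X^{n+1}(x')\bigr)\le |X^n(x)-X^n(x')|\,|X^{n+1}(x)-X^{n+1}(x')|,
\]
hence $|X^{n+1}(x)-X^{n+1}(x')|\le(1-Ch)^{-1}|X^n(x)-X^n(x')|$; chaining and using $(1-Ch)^{-N}\le e^{CT/(1-Ch)}$, which is bounded for $h$ small, yields a Lipschitz constant in $x$ uniform in $\e,h,n$. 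Lipschitz continuity in $n$ is then immediate: $|X^{n+1}(x)-X^n(x)|\le h\,C_a$, so $|X^n(x)-X^m(x)|\le C_a|n-m|h$.

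The main obstacle is the very first step — \emph{existence} of the implicit update — because, unlike the contraction-mapping situation one would get from a global Lipschitz bound on $a(\cdot,\nabla u^{n+1}_\e(\cdot))$, here only the one-sided bound \eqref{OSLh} is available, so $\Phi$ need not be a contraction and Banach's theorem does not apply. The right tool is the monotone-operator / degree-theoretic surjectivity of $z\mapsto z-hF(z)$ under $Ch<1$; uniqueness, by contrast, is the easy consequence of strict monotonicity. I would also flag that everything must be tracked for uniformity in $\e$: all constants entering the estimates ($C$ in \eqref{OSLh}, $C_1$, $C_a$) are $\e$-independent by hypothesis and by Lemma \ref{lm: u^nproperties}, so the bounds pass to the limit $\e\to0$ as needed later. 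Finally, this is exactly where the implicit scheme is essential — the same pairing argument with the \emph{explicit} update $X^{n+1}=X^n+hF(X^n)$ would produce a factor $(1+Ch)$ rather than $(1-Ch)^{-1}$ going the wrong way only in appearance, but in fact loses the contraction structure needed for equicontinuity of the inverse flow, which is the point alluded to in Remark \ref{rm:explicit scheme}.
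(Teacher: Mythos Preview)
Your proof is correct and follows essentially the same approach as the paper: existence of the implicit update via a fixed-point argument, uniqueness and Lipschitz-in-$x$ from the (OSL$_h$) pairing yielding the factor $(1-Ch)^{-1}$ per step, and Lipschitz-in-$n$ plus local boundedness from the uniform bound on $a(\cdot,\nabla u^{n}_\e(\cdot))$. The only minor difference is that the paper treats existence more simply, applying Brouwer directly to the continuous \emph{bounded} map $z\mapsto X^n+h\,a(z,\nabla u^{n+1}_\e(z))$ (which therefore maps a large ball into itself), without invoking Minty--Browder or monotone-operator surjectivity; your parenthetical ``Brouwer plus strict monotonicity'' is exactly what the paper does.
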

\begin{proof} The existence of $X^{n+1}$ given $X^n$, is insured by the Brouwer fixed point theorem, the map $y~\mapsto~X^n+h\,a(y,\nabla u^{n+1}_\e(y))$ being continuous and bounded. The uniqueness of $X^{n+1}$ and the Lipschitz continuity w.r.t.~$x$ follow both from \eqref{OSLh} and the upper bound on $h$. Indeed, for $X^{n+1}$ and $Y^{n+1}$ defined by \eqref{eq:Eulerimplicit} we~have
\begin{align}
&|X^{n+1}-Y^{n+1}|^2=(X^{n+1}-Y^{n+1})\cdot(X^n-Y^n)
\nonumber\\
&\quad\quad+h\,(X^{n+1}-Y^{n+1})\cdot\left(a(X^{n+1},\nabla u^{n+1}_\e(X^{n+1}))-a(Y^{n+1},\nabla u^{n+1}_\e(Y^{n+1}))\right)
\nonumber\\
&\quad\quad\le|X^{n+1}-Y^{n+1}||X^n-Y^n|+C\,h\,|X^{n+1}-Y^{n+1}|^2\,,
\label{eq:estequicontinuity}
\end{align}
i.e.
$$
|X^{n+1}-Y^{n+1}|\le\left(1+\f{C\,h}{1-C\,h}\right)|X^n-Y^n|\,.
$$
Taking $X^n=Y^n$ we get the uniqueness, while iterating over $n$ we get for two starting points $x,\ y\in\R^d$
$$
|X^{n}(x)-X^{n}(y)|\le\left(1+\f{C\,h}{1-C\,h}\right)^{n}|x-y|\,.
$$
Therefore, for $\delta\in(0,1)$ s.t. $C\, h\le 1-\delta$, we have
$$
|X^{n}(x)-X^{n}(y)|\le\exp(n\,C\,h\,\delta^{-1})|x-y|\le\exp(CT\,\delta^{-1})|x-y|\,,
\quad n=0,\cdots,N\,.
$$

Finally, for any $x\in\R^d$ it holds true that
$$
|X^{n}(x)-X^m(x)|\le h|n-m|\sup\limits_{x\in\R^d}\,\sup\limits_{p\in \overline B(0,C_1)}|a(x,p)|\,,
$$
where $C_1$ is given in \eqref{propertiesun}, and we obtain the Lipschitz continuity w.r.t. $n$. The above estimate gives us also that the image by $X^n$ of $B(0,R)$ is contained in the ball of radius $(R+T\sup\limits_{x\in\R^d}\,\sup\limits_{p\in \overline B(0,C_1)}|a(x,p)|)$. Therefore, $X^n$ is locally bounded uniformly in $\e$ and $n$ and the Lemma is proved.
\end{proof}
\begin{lemma}[Properties of $m^n$] Let $m_0\in\cM_1(\R^d)$. Under the same hypothesis as in Lemma \ref{lm:Xh}, the approximated solution $m^n$ in \eqref{mn} is well defined in $\cM_1(\R^d)$. Moreover, $m^n$ satisfies the same properties (i)-(v) as the exact measure solution $m$, uniformly in $n$.
\label{lm:mhe}
\end{lemma}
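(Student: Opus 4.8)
The plan is to derive every assertion from the regularity of the discrete flow already established in Lemma~\ref{lm:Xh}, combined with the elementary functorial properties of the push‑forward; no idea beyond careful bookkeeping should be required.

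\textbf{Step 1 (well‑posedness of $m^n$).} By Lemma~\ref{lm:Xh} each map $x\mapsto X^n(x)$ is Lipschitz continuous, hence Borel measurable, so $m^n=X^n(\cdot)\#m_0$ in \eqref{mn} is a well‑defined Borel measure: for a signed $m_0$ one uses the Jordan decomposition $m_0=m_0^+-m_0^-$ and sets $m^n:=X^n\#m_0^+-X^n\#m_0^-$. From the elementary inequality $|X^n\#m_0|\le X^n\#|m_0|$ and $(X^n)^{-1}(\R^d)=\R^d$ one gets $|m^n|(\R^d)\le|m_0|(\R^d)<+\infty$, so $m^n\in\cM_1(\R^d)$. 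I would also record, for later use and to parallel the continuous picture, that the one–step map $\Psi_k\colon X^k\mapsto X^{k+1}$ implicitly defined by \eqref{eq:Eulerimplicit} is a homeomorphism of $\R^d$ with explicit inverse $z\mapsto z-h\,a(z,\nabla u^{k+1}_\e(z))$ — injectivity of this inverse is exactly the strong monotonicity estimate behind \eqref{OSLh} with $Ch<1$ already used in Lemma~\ref{lm:Xh}; hence $X^n=\Psi_{n-1}\circ\cdots\circ\Psi_0$ is a homeomorphism, in particular onto, the discrete analogue of the onto property of the Filippov flow invoked in Theorem~\ref{th:HJ-CL}. Together with the local bound of Lemma~\ref{lm:Xh} this gives $|X^n(x)|\to+\infty$ as $|x|\to+\infty$, so that $\phi\circ X^n\in C^0_0(\R^d)$ whenever $\phi\in C^0_0(\R^d)$ and the dual formulation $\langle m^n,\phi\rangle=\langle m_0,\phi\circ X^n\rangle$ is meaningful.

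\textbf{Step 2 (properties (i)--(iv)).} These are purely structural and transfer verbatim from \cite{PR}. For (i), if $m_0\ge\nu_0$ then $m^n(B)=m_0((X^n)^{-1}(B))\ge\nu_0((X^n)^{-1}(B))$ for every Borel $B$. For (ii), $m^n(\R^d)=m_0((X^n)^{-1}(\R^d))=m_0(\R^d)$. For (iii), the Jordan decomposition together with $|X^n\#m_0|\le X^n\#|m_0|$ gives $|m^n|(B)\le|m_0|((X^n)^{-1}(B))$ for every Borel $B$ and, taking $B=\R^d$, the contraction $\|m^n\|_{\cM_1}\le\|m_0\|_{\cM_1}$. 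For (iv), with $X^{n,k}:=\Psi_{n-1}\circ\cdots\circ\Psi_k$ one has $X^n=X^{n,k}\circ X^k$ for $0\le k\le n$, and functoriality of the push‑forward, $(f\circ g)\#\mu=f\#(g\#\mu)$, yields the semigroup identity $X^{n,k}\#m^k=m^n$.

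\textbf{Step 3 (property (v), the only quantitative point).} Here Lemma~\ref{lm:Xh} is essential: it provides the affine bound $|X^n(x)|\le|x|+L$ with $L:=T\sup_{x\in\R^d}\sup_{p\in\overline B(0,C_1)}|a(x,p)|$, $C_1$ as in \eqref{propertiesun}, and $L$ independent of $n$, $h$ and $\e$. Hence $(X^n)^{-1}(\R^d\setminus B_R(0))\subseteq\R^d\setminus B_{R-L}(0)$ for $R>L$, so that
\[
|m^n|\bigl(\R^d\setminus B_R(0)\bigr)\le\bigl(X^n\#|m_0|\bigr)\bigl(\R^d\setminus B_R(0)\bigr)=|m_0|\Bigl((X^n)^{-1}\bigl(\R^d\setminus B_R(0)\bigr)\Bigr)\le|m_0|\bigl(\R^d\setminus B_{R-L}(0)\bigr).
\]
Since $|m_0|$ is a finite measure, the right–hand side tends to $0$ as $R\to+\infty$ and does not depend on $n$, which is precisely property (v), uniformly in $n$.

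I do not expect a genuine obstacle: once Lemma~\ref{lm:Xh} is in hand the proof is essentially verification. The one point to watch is that every constant used be uniform in $n$ (and in $\e$), and this is exactly why the \emph{implicit} Euler scheme \eqref{eq:Eulerimplicit} was chosen instead of an explicit one (cf.\ Remark~\ref{rm:explicit scheme}): the $n$‑independent affine growth $|X^n(x)|\le|x|+L$ is what makes (v) work — and, downstream, it is what will provide the compactness needed to pass to the limit in $C^0([0,T];\cM_1(\R^d)\,w\!-\!*)$.
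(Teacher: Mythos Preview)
Your proof is correct and follows the same route as the paper's own argument: well-posedness from the continuity and surjectivity of $x\mapsto X^n(x)$ provided by Lemma~\ref{lm:Xh}, properties (i)--(iv) as direct consequences of the push-forward definition, and (v) from the uniform-in-$n$ affine growth bound $|X^n(x)-x|\le T\sup_{x}\sup_{|p|\le C_1}|a(x,p)|$. The paper's proof is a two-line sketch deferring to \cite{PR}; you have simply written out the verification in full, including the homeomorphism argument for $X^n$ that the paper merely asserts.
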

\begin{proof} By Lemma \ref{lm:Xh}, the map $x\mapsto X^n(x)$ is uniquely defined and continuous over $\R^d$. Moreover, it is onto from $\R^d$ to $\R^d$. Therefore, the approximated solution $m^n$ is well defined in $\cM_1(\R^d)$. Furthermore, it is easy to see that $m^n$ satisfies all the properties (i)-(v) of the exact measure solution~$m$. Concerning~(v), it follows from the local and uniform in $n$ boundedness of $X^n$ proved above (see Lemma 3.1 in~\cite{PR}).
\end{proof}
%
\subsection{The convergence}
\label{Sec:convergence}
We can finally discuss the convergence of the semi-discrete scheme \eqref{HJn}-\eqref{eq:Eulerimplicit}-\eqref{mn} to the solution of the continuous problem \eqref{HJ}-\eqref{CE}-\eqref{IC}. The key tools to prove the convergence result have been given in~\cite{PR}, where the authors analyzed the stability of the measure solution with respect to perturbations of the vector field $a$. The stability of the Filippov characteristics is of course the basic stone. Later, these tools have been  adapted  in \cite{BK} to analyze the stability of the viscosity-measure solution of \eqref{HJ}-\eqref{CE}-\eqref{IC} with $a(x,p)=p$, with respect to the vanishing viscosity perturbation of the (HJ) equation.

To discuss the convergence, we define the  piecewise constant w.r.t. time approximated solution
$$
u_h(x,t)=u^{[t/h]}(x)\,,\quad (x,t)\in Q_T\,, \quad\quad
m_h^\e(t)=m^{[t/h]}\,,\quad t\in[0,T]\,,
$$
and $u_h^\e=u_h*\rho_\e$. Moreover, we also need to define  the following time continuous trajectories by linear interpolation of $X^n(x)$ and $X^{n+1}(x)$, for any $x\in\R^d$,
\begin{equation}
X_h^\e(t;x)=X^n(x)+(t-t^n)\,a(X^{n+1}(x),\nabla u^{n+1}_\e(X^{n+1}(x)))\,,\quad t\in[t^n,t^{n+1}]\,,\quad n=0,\cdots,N\,.
\label{eq:Xh}
\end{equation}
Note that \eqref{eq:Xh} gives us time continuous trajectories $X_h^\e$ with the same regularity as $X^n$, i.e. $X_h^\e$ is locally bounded and Lipschitz continuous w.r.t. $(t,x)\in[0,T]\times\R^d$, uniformly in $\e$ and $h$.


%
\begin{theorem}[Convergence]
Let $u_0\in(W^{1,\infty}\cap BUC)(\R^d)$ be semiconcave, $m_0\in\cM_1(\R^d)$ and assume $(\mathbf{H}_1)$, $(\mathbf{H}_2')$, $(\mathbf{H}_3)$, $(\mathbf{H}_4)$, $(\mathbf{H}_5)$ and $(\mathbf{A}_1)$. Assume in addition that $a\in (C^0(\R^d\times\R^d))^d$, $u^n_\e$~satisfies the ($\hbox{OSL}_h$) condition \eqref{OSLh} and $Ch<1$. Let $\e=\e(h)$ be s.t. $\e(h)\to0$ as $h\to0$. Then, as $h\to0$,
\begin{itemize}
\item[(i)] $u_h\to u$ in $L^\infty(Q_T)$ and $\nabla u_h^{\e}(x,t)\to \nabla u(x,t)$ at any point $(x,t)\in Q_T$ of differentiability of $u$, where $u$ is the unique viscosity solution of~\eqref{HJ};
\item[(ii)] $X_h^\e$ converges locally uniformly in $Q_T$ toward the unique Filippov characteristic $X$ associated to the vector field $a(\cdot,\nabla u)$;
\item[(iii)] $m_h^{\e} \rightharpoonup m$ in $C^0([0,T];\cM_1(\R^d)\,w-*)$, where $m$ is the unique measure solution of \eqref{CE}.
\end{itemize}
\label{th:convergence}
\end{theorem}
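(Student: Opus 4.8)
I would establish the three assertions in order, the first two producing the compactness and limit identification required for the last.

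\emph{Assertion (i).} The scheme \eqref{HJn} is monotone in $u^n$, hence satisfies a discrete comparison principle, and, as observed after \eqref{eq:OSDD}, it is consistent with the convex reformulation \eqref{HJ**} of \eqref{HJ}. Combining this with the uniform bounds of Lemma~\ref{lm: u^nproperties} and the exact initialization $u_h(\cdot,0)=u_0$, the classical rate-of-convergence argument for monotone schemes (discrete comparison together with the doubling-of-variables technique) yields $\|u_h-u\|_{L^\infty(Q_T)}=O(\sqrt h)\to 0$, where $u$ is the unique viscosity solution of \eqref{HJ}--\eqref{IC} given by Theorem~\ref{th:HJ}. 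For the gradients I would write $\nabla u_h^\e(x,t)=\int_{\R^d}\nabla u_h(y,t)\,\rho_\e(x-y)\,dy$, an average of $\nabla u_h(\cdot,t)$ over $B(x,\e)$; by Lemma~\ref{lm:semiconcavity} the functions $u_h(\cdot,t)$ and the limit $u(\cdot,t)$ are semiconcave with a constant uniform in $h$, $n$ and $t$, so their superdifferentials depend upper semicontinuously on the base point, and at a point $x$ of differentiability of $u(\cdot,t)$ the superdifferentials of $u_h(\cdot,t)$ over $B(x,\e)$ shrink to $\{\nabla u(x,t)\}$ as $h\to 0$ and $\e=\e(h)\to 0$. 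This forces $\nabla u_h^\e(x,t)\to\nabla u(x,t)$ at every differentiability point of $u$, in particular a.e.\ in $Q_T$.

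\emph{Assertion (ii).} By Lemma~\ref{lm:Xh} and the interpolation formula \eqref{eq:Xh}, the trajectories $X_h^\e$ are locally bounded and Lipschitz in $(t,x)$ with constants independent of $h$ and $\e$, so the Arzel\`a--Ascoli theorem provides a subsequence converging locally uniformly on $Q_T$ to some $\bar X$. To identify $\bar X$ I would invoke the stability of Filippov characteristics of \cite{PR}, in the form adapted in \cite{BK}, applied to the approximating vector fields $b_h^\e(x,t):=a(x,\nabla u_h^\e(x,t))$: by $(\mathbf{A}_1)$ and $\|\nabla u_h^\e\|_{L^\infty}\le C_1$ they are bounded in $L^\infty(Q_T)$ uniformly in $h$ and $\e$; by assertion (i) they converge a.e.\ in $Q_T$ to $a(\cdot,\nabla u)$; and by the $(\mathrm{OSL}_h)$ hypothesis \eqref{OSLh} they obey a one-sided Lipschitz bound with constant $C$ independent of $h$ and $\e$, which passes to the a.e.\ limit and shows that $a(\cdot,\nabla u)$ satisfies \eqref{OSL} with $\gamma\equiv C$. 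Reading the implicit Euler step \eqref{eq:Eulerimplicit}--\eqref{eq:Xh} as an approximate Filippov inclusion for $b_h^\e$ --- the implicit character of the step being exactly what supplies the equicontinuity needed here, cf.\ \eqref{eq:estequicontinuity} and Remark~\ref{rm:explicit scheme} --- and letting $h\to 0$, $\e\to 0$, the Poupaud--Rascle stability result identifies $\bar X$ as a Filippov characteristic of $a(\cdot,\nabla u)$. Since this field satisfies \eqref{OSL}, Theorem~\ref{th:HJ-CL} makes such a characteristic unique, whence $\bar X=X$ and the whole family $X_h^\e$ converges locally uniformly to $X$.

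\emph{Assertion (iii).} I would metrize weak-$*$ convergence on bounded subsets of $\cM_1(\R^d)$ by a bounded-Lipschitz (flat) distance $d$. Since $X_h^\e(t^n;\cdot)=X^n(\cdot)$ and, by \eqref{eq:Xh}, $\sup_x|X_h^\e(t;x)-X^{[t/h]}(x)|\le Ch$, one gets $d(m_h^\e(t),X_h^\e(t;\cdot)\#m_0)\le C\,h\,|m_0|(\R^d)$, while assertion (ii) gives $X_h^\e(t;\cdot)\#m_0\rightharpoonup X(t;\cdot)\#m_0=m(t)$ weakly-$*$ for each $t\in[0,T]$. The Lipschitz-in-time bound of Lemma~\ref{lm:Xh} then yields $d(m_h^\e(t),m_h^\e(s))\le C|t-s|\,|m_0|(\R^d)$ uniformly in $h$ and $\e$, and Lemma~\ref{lm:mhe} provides that $(m_h^\e)$ is uniformly bounded in total variation (property (iii)) and uniformly tight (property (v)); an Arzel\`a--Ascoli argument in $C^0([0,T];\cM_1(\R^d)\,w-*)$ then extracts a uniformly convergent subsequence whose limit, by the pointwise identification above, is $t\mapsto m(t)$, and uniqueness of $m$ (Theorem~\ref{th:HJ-CL}) upgrades this to convergence of the whole family, which is assertion (iii).

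\emph{Main obstacle.} The hard part is assertion (ii): passing to the limit in the implicit Euler scheme so as to recover the Filippov flow of a vector field that is only $L^\infty$. This needs the full Poupaud--Rascle stability machinery --- a.e.\ convergence of $b_h^\e$ together with uniform $L^\infty$ and uniform one-sided Lipschitz bounds --- and a careful control of the time-discretization error showing that the interpolants $X_h^\e$ are genuine approximate Filippov solutions, a control which, as stressed in Remark~\ref{rm:explicit scheme}, is available for the implicit scheme \eqref{eq:Eulerimplicit} but not for its explicit analogue.
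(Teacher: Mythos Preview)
Your strategy for (i) and (iii) is essentially that of the paper. For (i) the paper also quotes the $O(h^{1/2})$ rate from monotone-scheme theory and then uses the uniform semiconcavity of $u^n_\e$ to show that every cluster point of $\nabla u^n_\e(x)$ lies in the superdifferential of $u(\cdot,t)$ at $x$, hence equals $\nabla u(x,t)$ at a differentiability point --- this is exactly your upper-semicontinuity argument in different words. For (iii) the paper simply invokes Proposition~3.1 of \cite{PR}, whose proof is precisely the tightness-plus-equicontinuity Arzel\`a--Ascoli argument you sketch.

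For (ii) there is a genuine gap. The Poupaud--Rascle stability theorem you cite concerns \emph{exact} Filippov characteristics of a sequence of vector fields $b_n$ converging a.e.\ and satisfying a uniform OSL bound; it does not, as stated, cover time-discrete approximations of these characteristics. Your $X_h^\e$ are implicit-Euler iterates, not Filippov trajectories of $b_h^\e$, and the obvious intermediate step ``show $X_h^\e$ is close to the exact Filippov flow of $b_h^\e$'' would need a Lipschitz bound on $b_h^\e$, which is only $O(\e^{-1})$ and produces an error of order $h/\e$ that need not vanish under the sole hypothesis $\e(h)\to 0$. Your phrase ``reading the implicit Euler step as an approximate Filippov inclusion'' names the right idea but does not supply the argument.

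The paper closes this gap by comparing $X_h^\e$ \emph{directly} to the limiting Filippov characteristic $X$ via a Gronwall-type estimate on $\Delta_h^\e:=X_h^\e-X$. One writes $\tfrac12\partial_t|\Delta_h^\e|^2$, bounds $-\partial_tX\cdot\Delta_h^\e$ from above using Definition~\ref{def:Filippov} evaluated at a nearby differentiability point $\bar x\in B(X(t;x),\delta)$, and then splits
\[
a(X^{n+1},\nabla u^{n+1}_\e(X^{n+1}))-a(\bar x,\nabla u(\bar x,t))
\]
into a term controlled by (OSL$_h$) applied at the pair $(X^{n+1},\bar x)$ and a term $a(\bar x,\nabla u^{n+1}_\e(\bar x))-a(\bar x,\nabla u(\bar x,t))$ that vanishes by your assertion~(i). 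The point is that only the \emph{one-sided} bound \eqref{OSLh} is used, never a Lipschitz estimate on $b_h^\e$, so no relation between $h$ and $\e$ beyond $\e(h)\to 0$ is needed. Integrating and letting $h\to 0$, $\delta\to 0$ yields $\tfrac12\partial_t|Y-X|^2\le C|Y-X|^2$ for the subsequential limit $Y$, hence $Y=X$. This direct estimate is the substance that your black-box invocation is missing; once it is written out, the rest of your argument goes through.
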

\begin{proof} Following standard results of the viscosity solution theory (see for instance \cite{So}), it can be proved that there exists a constant $C$ independent of $h$,  s.t.
\begin{equation}
\|u^n-u(t^n)\|_{L^\infty(\R^d)}\le C\,h^{1/2}\,,\quad\quad n=0,\cdots, N\,.
\label{eq:estun}
\end{equation}
Next, let $(x,t)\in Q_T$ be a point of differentiability of $u$. For $n=\left[\f th\right]$, by Taylor expansion and the semiconcavity of~$u^{n}_\e$, we have that
$$
u^n_\e(y)-u^n_\e(x)-\nabla u^n_\e(x)\cdot (y-x)\le(C^{u_0}_{conc}+T\,C^{H^*}_{conc}) |y-x|^2\,,\quad\forall\ y\in\R^d\,.
$$
Therefore, since $\nabla u^n_\e(x)\to\,p$ as $h\to0$, by the uniform bound of $\nabla u^n_\e$, the previous inequality and the convergence of $u^n_\e(t)$ toward $u(t)$, gives us
$$
u(y,t)-u(x,t)-p\cdot (y-x)\le (C^{u_0}_{conc}+T\,C^{H^*}_{conc})(t)|y-x|^2\,,
$$
i.e. $p$ belongs to the subdifferential of $u$ at $x$. Being $u$ differentiable at $(x,t)$, $p=\nabla u(x,t)$ and we have obtained the proof of (i).

We now prove the second part of the statement. From Lemma \ref{lm:Xh} it follows immediately that, up to a subsequence, $X_h^\e$ converges uniformly on every compact set of $Q_T$ to a continuous function $Y$, as $h\to0$. Next, since by (i) and the ($\hbox{OSL}_h$) condition \eqref{OSLh}, the (OSL) condition \eqref{OSL} is also satisfied, the Filippov characteristic $X$ associated to the vector field $a(\cdot,\nabla u)$ is unique for any $x\in\R^d$. In order to prove that $Y=X$, let us define $\Delta_h^\e(t;x):=X_h^\e(t;x)-X(t;x)$. We are going to prove that $\Delta_h^\e(t;x)\to0$ as $h\to0$. Indeed, we have
\begin{equation}
\f12\partial_t|\Delta_h^\e(t;x)|^2=\partial_tX_h^\e(t;x)\cdot\Delta_h^\e(t;x)-\partial_tX(t;x)\cdot\Delta_h^\e(t;x)\,.
\label{eq:est0}
\end{equation}
By the Definition \ref{def:Filippov} of Filippov characteristics it holds true, for all $x\in\R^d$, a.e. $t\in[0,T]$ and all $r>0$, that
$$
\partial_tX(t;x)\cdot\Delta_h^\e(t;x)\ge \einf_{y\in B(X(t;x),r)}a(y,\nabla u(y,t))\cdot\Delta_h^\e(t;x)\,.
$$
Hence, for all $\delta>0$, there exists $\overline x= \overline x(\delta)\in B(X(t;x),\delta)$ point of differentiability of $u$, s.t.
\begin{equation}
\partial_tX(t;x)\cdot\Delta_h^\e(t;x)\ge a(\overline x,\nabla u(\overline x,t))\cdot\Delta_h^\e(t;x)-\delta\,.
\label{eq:est1}
\end{equation}
Plugging \eqref{eq:est1} into \eqref{eq:est0} and using the definition \eqref{eq:Xh} of $X^{\e}_h$, we obtain for $n=\left[\f th\right]-1$,
\begin{equation}
\begin{array}{lrc}
\f12\partial_t|\Delta_h^\e(t;x)|^2&\le& a(X^{n+1}(x),\nabla u^{n+1}_\e(X^{n+1}(x)))\cdot \Delta_h^\e(t;x)
-a(\overline x,\nabla u(\overline x,t))\cdot\Delta_h^\e(t;x)+\delta\\
&=&\left(a(X^{n+1}(x),\nabla u^{n+1}_\e(X^{n+1}(x)))-a(\overline x,\nabla u^{n+1}_\e(\overline x))\right)\cdot \Delta_h^\e(t;x)\quad\quad\quad\quad\\
&&\quad+\left(a(\overline x,\nabla u^{n+1}_\e(\overline x))-a(\overline x,\nabla u(\overline x,t))\right)\cdot\Delta_h^\e(t;x)+\delta:=I_1+I_2+\delta\,.
\end{array}
\label{eq:est3}
\end{equation}
In order to estimate $I_1$, we decompose $\Delta_h^\e(t;x)$ as
$$
\Delta_h^\e(t;x)=(X_h^\e(t;x)-X^{n+1}(x))+(X^{n+1}(x)-\overline x)+(\overline x-X(t;x))\,,
$$
we note ${\cal A}:=\sup\limits_{x\in\R^d}\,\sup\limits_{p\in \overline B(0,C_1)}|a(x,p)|$, where $C_1$ is given in \eqref{propertiesun}, and we make use of the ($\hbox{OSL}_h$) condition \eqref{OSLh} to get
\begin{equation}
I_1\le 2{\cal A}\,|X_h^\e(t;x)-X^{n+1}(x)|+C\,|X^{n+1}(x)-\overline x|^2+2{\cal A}\,\delta
\le 2{\cal A}^2\,h+C\,|X^{n+1}(x)-\overline x|^2+2{\cal A}\delta\,.
\label{est:I1}
\end{equation}
>From \eqref{est:I1}, \eqref{eq:est3} and the uniform boundedness of $\Delta_h^\e(t;x)$ on every compact subset of $Q_T$, we have obtained
$$
\f12\partial_t|\Delta_h^\e(t;x)|^2\le2{\cal A}^2\,h+C\,|X^{n+1}(x)-\overline x|^2+2{\cal A}\delta
+C|a(\overline x,\nabla u^{n+1}_\e(\overline x))-a(\overline x,\nabla u(\overline x,t))|+\delta\,.
$$
Using the convergence results obtained above and passing to the limit into the previous inequality as $h\to0$ and $\delta\to0$, we get that
$$
\f12\partial_t|Y(t;x)-X(t;x)|\le C\,|Y(t;x)-X(t;x)|^2\,,
$$
i.e. $Y(t;\cdot)=X(t;\cdot)$ in $L^2_{loc}(\R^d)$ a.e. $t\in[0,T]$. Finally, from the continuity of $Y$ and $X$ we deduce that $Y=X$. By the uniqueness of $X$ and the uniform boundedness of $X^\e_h$, we have that all the sequence $X^\e_h$ converge.

It remains to prove the convergence of $m^\e_h$. This can be obtained exactly as in Proposition 3.1 in~\cite{PR} and we skip the proof. We just underline that the strong convergence of the trajectories $X^\e_h$, their time equicontinuity and the uniform compactness at infinity of $m^\e_h$ (property (v)) are the key tools to prove that the sequence $\langle m^\e_h(t),\phi\rangle$ is convergent and equicontinuous on $[0,T]$, for any $\phi\in C^0_0(\R^d)$.
\end{proof}

Proceeding as in \eqref{est:OSL}, one can obtain the ($\hbox{OSL}_h$) condition \eqref{OSLh} for special $a$. This is summarized in the following Corollary which is a straightforward consequence of Theorem~\ref{th:convergence}. It is worth noticing that the Hamiltonians \eqref{SH} and \eqref{BS} enter in the framework of this Corollary.

\begin{corollary} Assume the same hypothesis of Theorem \ref{th:convergence}, except the ($\hbox{OSL}_h$) condition \eqref{OSLh}. If in addition, $a$ is differentiable w.r.t. $p$, satisfies a one sided Lipschitz condition w.r.t. $x$ locally uniformly in $p$ and either $D_pa(x,p)=I$ or $d=1$ and $\partial_pa(x,p)$ is nonnegative and upper bounded, then the same conclusions as in Theorem~\ref{th:convergence} hold true.
\end{corollary}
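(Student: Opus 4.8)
The plan is to show that, under the structural assumptions imposed on $a$, the ($\hbox{OSL}_h$) condition \eqref{OSLh} is \emph{automatically} satisfied with a constant $C$ independent of $\e$ and $h$; once this is established the statement follows immediately by invoking Theorem~\ref{th:convergence} (the remaining requirement $Ch<1$ becoming a smallness restriction on $h$ with this explicit $C$). The derivation of \eqref{OSLh} is the discrete, mollified and uniform-in-$(\e,h,n)$ analogue of the computation \eqref{est:OSL} already carried out in Section~\ref{Sec:preliminaries}.

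First I would collect the uniform bounds available at the semi-discrete level. By Lemma~\ref{lm: u^nproperties}, $\|\nabla u^n\|_{L^\infty(\R^d)}\le C_1$ with $C_1$ independent of $h$ and $n$; by Lemma~\ref{lm:semiconcavity}, $u^n$ is semiconcave with constant $C^{u_0}_{conc}+T\,C^{H^*}_{conc}$, again independent of $h$ and $n$. Convolving with $\rho_\e$, the mollified functions $u^n_\e=u^n*\rho_\e$ satisfy $\|\nabla u^n_\e\|_{L^\infty(\R^d)}\le C_1$ and, as in \cite{CS}, $\nu^T D^2u^n_\e(x)\,\nu\le C^{u_0}_{conc}+T\,C^{H^*}_{conc}$ for all $x\in\R^d$ and all unit vectors $\nu$, all of these bounds being uniform in $\e$, $h$ and $n$. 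In particular $\nabla u^n_\e(x)$ always lies in the fixed ball $\overline B(0,C_1)$.

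Next, fix $n$ and $x,y\in\R^d$ and set $p=\nabla u^n_\e(x)$, $q=\nabla u^n_\e(y)$. The differentiability of $a$ in $p$ gives
$$
a(x,p)-a(y,q)=\int_0^1 D_pa\bigl(x,q+s(p-q)\bigr)(p-q)\,ds+\bigl(a(x,q)-a(y,q)\bigr)\,;
$$
writing $p-q=\int_0^1 D^2u^n_\e(y+\sigma(x-y))(x-y)\,d\sigma$ and taking the scalar product with $x-y$ exactly as in \eqref{est:OSL}, one obtains
$$
\bigl(a(x,\nabla u^n_\e(x))-a(y,\nabla u^n_\e(y))\bigr)\cdot(x-y)\le\int_0^1\!ds\int_0^1\!d\sigma\,(x-y)^T\Gamma(s)\,D^2u^n_\e(y+\sigma(x-y))(x-y)+C\,|x-y|^2,
$$
with $\Gamma(s)=D_pa\bigl(x,q+s(p-q)\bigr)$. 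The last term comes from $\bigl(a(x,q)-a(y,q)\bigr)\cdot(x-y)\le C|x-y|^2$, which is exactly the one-sided Lipschitz condition of $a$ in $x$ evaluated at $q\in\overline B(0,C_1)$; since this is assumed ``locally uniformly in $p$'', the constant $C$ depends only on $C_1$, hence not on $\e$, $h$ or $n$. The double-integral term is then controlled using the two admissible structures: if $D_pa(x,p)=I$ then $\Gamma(s)=I$ and the integrand is $\le(C^{u_0}_{conc}+T\,C^{H^*}_{conc})|x-y|^2$ directly by the semiconcavity bound on $u^n_\e$; if $d=1$ then $\Gamma(s)=\partial_pa(x,q+s(p-q))$ is a scalar, nonnegative and bounded above by a constant depending only on $C_1$, while $D^2u^n_\e(\cdot)\le C^{u_0}_{conc}+T\,C^{H^*}_{conc}$, so that (using $\Gamma(s)\ge0$) the product is bounded above by a constant depending only on $C_1$ and $C^{u_0}_{conc}+T\,C^{H^*}_{conc}$.

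In either case \eqref{OSLh} holds with $C$ independent of $\e$ and $h$, so that for $h$ small enough $Ch<1$ and Theorem~\ref{th:convergence} applies verbatim, yielding (i)--(iii). I do not expect a genuine obstacle here: the only points requiring care are that the constants supplied by Lemmas~\ref{lm: u^nproperties} and~\ref{lm:semiconcavity} are \emph{already} uniform in $h$ and $n$, that mollification preserves both the gradient bound and the semiconcavity bound, and --- in the case $d=1$ --- the elementary sign manipulation using $\Gamma(s)\ge0$ together with the upper bound on $D^2u^n_\e$.
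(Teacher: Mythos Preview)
Your proposal is correct and follows exactly the route the paper indicates: the paper's ``proof'' is the single sentence preceding the corollary, namely that by proceeding as in \eqref{est:OSL} one obtains the ($\hbox{OSL}_h$) condition \eqref{OSLh} for these special $a$, after which the corollary is a ``straightforward consequence of Theorem~\ref{th:convergence}''. You have simply spelled out this argument, correctly invoking the $h$- and $n$-uniform gradient bound from Lemma~\ref{lm: u^nproperties} and the uniform semiconcavity constant from Lemma~\ref{lm:semiconcavity}, and noting that mollification preserves both.
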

\begin{remark}[The explicit Euler scheme] The explicit Euler scheme
\begin{equation}
X^{n+1}=X^n+h\,a(X^n,\nabla u^n_\e(X^n))\,,
\label{eq:Eulerexplicit}
\end{equation}
does not provide a family of equicontinuous characteristics, under the ($\hbox{OSL}_h$) condition~\eqref{OSLh}. This is a difficulty naturally intrinsic to the (OSL) condition that allows discontinuity of compressive type only, while the map $x\mapsto X^n(x)$ given by \eqref{eq:Eulerexplicit} can be expansive. Indeed, we have
\begin{align}
&|X^{n+1}-Y^{n+1}|^2=(X^{n+1}-Y^{n+1})\cdot(X^n-Y^n)
\nonumber\\
&\quad\quad+h\,(X^{n+1}-Y^{n+1})\cdot\left(a(X^{n},\nabla u^{n}_\e(X^{n}))-a(Y^{n},\nabla u^{n}_\e(Y^{n}))\right)
\nonumber\\
&\quad\quad\le\f12|X^{n+1}-Y^{n+1}|^2+\f12|X^n-Y^n|^2+C\,h\,|X^{n}-Y^{n}|^2
+h^2|a(X^{n},\nabla u^{n}_\e(X^{n}))-a(Y^{n},\nabla u^{n}_\e(Y^{n}))|^2\,,
\nonumber
\end{align}
i.e., with the same constant ${\cal A}$ as in Theorem \ref{th:convergence},
$$
|X^{n+1}-Y^{n+1}|^2\le(1+2Ch)|X^n-Y^n|^2+8\,{\cal A}^2\,h^2
$$
and iterating over $n$ : $|X^n(x)-X^n(y)|^2\le e^{2C\,T}\left[|x-y|^2+8\,{\cal A}^2\,T\,h\right]\,$.
\label{rm:explicit scheme}
\end{remark}
%
\section{The fully discrete  semi-lagrangian scheme}
\label{Sec:fully discrete}
In this section we introduce a finite element discretization of \eqref{HJn} and an approximation of~\eqref{mn} by a bounded discrete measure, yielding a fully discrete scheme for \eqref{HJ}-\eqref{CE}-\eqref{IC}.

For an arbitrarily fixed space step $k>0$, we consider the regular uniform grid of $\R^d$ given by $\cX^k:=\{x_i=ik\,,\; i\in\Z^d\}$ . Let $\mathcal{T}^k=\{S^k_j\}_{j\in\J^k}$ be the associated collection of non-degenerate, pairwise disjoint and uniform simplices having as vertices lattice points $x_i\in\cX^k$ and covering $\R^d$, ($S^k_j$ are triangles in dimension 2 and tetrahedra in  dimension 3). We denote also by
$$
W^k=\{w\in C(\R^d) :\text{$w$ is linear on $S_j^k$, $j\in\J^k$}\},
$$
the space of continuous piecewise linear functions on $\mathcal{T}^k$. Then, each $w\in W^k$ can be expressed as
\begin{equation}
w(x)=\sum_{i\in \Z^d}\beta^k_i(x)w(x_i),
\label{eq:intoperator}
\end{equation}
for basis functions $\beta^k_i\in W^k$ satisfying $\beta^k_i(x_j)=\delta_{ij}$ for $i,j\in\Z^d$. It immediately
follows that any $\beta^k_i$ has compact support, $0\leq \beta^k_i(x)\leq 1$, $\sum_{i\in\Z^d}\beta^k_i(x)=1$ and at any $x\in\R^d$ at most $(d+1)$ functions $\beta^k_i$ are non-zero.
 In the sequel, the interpolation operator defined by \eqref{eq:intoperator} will be denoted $P_k$.

The fully discrete approximation of \eqref{HJn} based on the above space discretization is naturally given~by
\begin{equation}
\left\{
\begin{array}{lrc}
u^n_k(x)=\sum_{i\in \Z^d}\beta^k_i(x)\,u^n_{k,i}\,,\quad\quad n=0,\cdots,N\,,\\ \\
u^{n+1}_{k,i}=\inf_{\xi\in\R^d}\left\{u^n_k(x_i-\xi h)+h\,H^*(x_i,t^n,\xi)\right\}\,,\quad i\in\Z^d\,,
\end{array}
\right.
\label{HJnk}
\end{equation}
where $u^0_{k,i}=u_0(x_i)$. It is obvious that, thanks to the continuous piecewise linear interpolation of the discrete approximation  $(u^{n}_{k,i})_{i\in\Z^d}$ on the lattice at any time step, the continuous function $u^n_k\in W^k$ shares the properties of the semi-discrete approximation $u^n$ given in Lemma \ref{lm: u^nproperties}. Therefore, the equivalent of Lemma \ref{lm: u^nproperties} for the fully discrete approximation $u^n_k$ will be skipped here. On the other hand, the semiconcavity of $u^n_k$, given a semiconcave initial data $u_0$, is not straightforward.
Therefore, we shall give the  equivalent of Lemma \ref{lm:semiconcavity} here and the proof in the Appendix.
\begin{lemma}[Semiconcavity]
Let $u_0\in W^{1,\infty}(\R^d)$ be semiconcave. Then, under hypothesis $(\mathbf{H}_1)$, $(\mathbf{H}_2')$, $(\mathbf{H}_3)$, $(\mathbf{H}_4)$ and $(\mathbf{H}_5)$, $u^n_k$ is discretely semiconcave for all $n=0,\cdots N$, i.e.
\begin{equation}
u^n_{k}(x+x_j)-2\,u^n_{k}(x)+u^n_{k}(x-x_j)\le (C_{conc}^{u_0}+T\,C_{conc}^{H^*})|x_j|^2\,,\quad x\in\R^d\,,\;j\in\Z^d\,,
\label{grid_semiconcavity}
\end{equation}
and weakly semiconcave on $\R^d$, i.e.
\begin{equation}
u^n_{k}(x+y)-2\,u^n_{k}(x)+u^n_{k}(x-y)\le (C_{conc}^{u_0}+T\,C_{conc}^{H^*})\left[|y|^2+\f{k^2}2(E(x+y)+E(x-y))\right]
\,,\quad x,y\in\R^d\,,
\label{w_semiconcavity}
\end{equation}
where $E(x)$ is a nonnegative, continuous and bounded function vanishing over $\cX^k$.
\label{lm:u^nksemiconcavity}
\end{lemma}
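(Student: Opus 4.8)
The plan is to mimic the induction on $n$ used in Lemma \ref{lm:semiconcavity}, but now carrying along the interpolation operator $P_k$ and controlling the extra error it introduces. Assume inductively that $u^n_k$ satisfies the grid-semiconcavity estimate \eqref{grid_semiconcavity} with constant $C^n_{conc}\le C^{u_0}_{conc}+n\,h\,C^{H^*}_{conc}$; for $n=0$ this is exactly the hypothesis on $u_0$ evaluated on lattice points. The key observation is that, for a grid node $x_i$ and any $j\in\Z^d$, the three points $x_i\pm\xi h$ appearing in \eqref{HJnk} differ by the lattice vector $x_j$, so that, picking a minimizer $\alpha^n(x_i)\in A^n(x_i)$ for the central term and using it as a (suboptimal) test value for the outer two, one gets exactly as in Lemma \ref{lm:semiconcavity}
\begin{align*}
u^{n+1}_{k,i+j}-2u^{n+1}_{k,i}+u^{n+1}_{k,i-j}
&\le \bigl[u^n_k(x_{i+j}-\alpha^n h)-2u^n_k(x_i-\alpha^n h)+u^n_k(x_{i-j}-\alpha^n h)\bigr]\\
&\quad+h\bigl[H^*(x_{i+j},t^n,\alpha^n)-2H^*(x_i,t^n,\alpha^n)+H^*(x_{i-j},t^n,\alpha^n)\bigr].
\end{align*}
The second bracket is bounded by $h\,C^{H^*}_{conc}|x_j|^2$ by $(\mathbf{H}_5)$. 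For the first bracket the subtlety is that the arguments $x_{i\pm j}-\alpha^n h$ and $x_i-\alpha^n h$ are \emph{not} lattice points (they are shifted by the real vector $\alpha^n h$), so \eqref{grid_semiconcavity} for $u^n_k$ does not apply directly; instead one invokes the weak semiconcavity \eqref{w_semiconcavity} for $u^n_k$ — which must therefore be proved \emph{simultaneously} in the same induction. This gives the first bracket $\le C^n_{conc}\bigl[|x_j|^2+\tfrac{k^2}2(E(\cdot)+E(\cdot))\bigr]$; however, since $E$ is bounded, when $x$ is itself a grid point one can do better. Here is where the argument needs care.

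The cleaner route, which I would adopt, is to prove \eqref{grid_semiconcavity} and \eqref{w_semiconcavity} in a single induction as follows. To establish \eqref{grid_semiconcavity} at level $n+1$, apply the identity above but now estimate the first bracket using \emph{grid} semiconcavity of $u^n_k$ \emph{along the lattice direction $x_j$ translated by $-\alpha^n h$}: since $u^n_k$ is piecewise linear, for any fixed $z\in\R^d$ the function $t\mapsto u^n_k(z+tx_j)$ restricted to $t\in\{-1,0,1\}$, \emph{with $z+tx_j$ all in the same simplex configuration}, obeys a discrete second-difference bound; more robustly, one writes $u^n_k(x_{i+j}-\alpha^n h)=\sum_\ell \beta^k_\ell(x_{i+j}-\alpha^n h)u^n_{k,\ell}$ and uses that the barycentric weights for the three shifted points $x_{i+j}-\alpha^n h$, $x_i-\alpha^n h$, $x_{i-j}-\alpha^n h$ are translates of one another by the integer vector $j$. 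Consequently the second difference of $u^n_k$ at these three points equals a convex combination (with the common weights $\beta^k_\ell(x_i-\alpha^n h)$) of the lattice second differences $u^n_{k,\ell+j}-2u^n_{k,\ell}+u^n_{k,\ell-j}$, each of which is $\le C^n_{conc}|x_j|^2$ by the inductive grid estimate. This yields \eqref{grid_semiconcavity} at level $n+1$ with $C^{n+1}_{conc}\le C^n_{conc}+h\,C^{H^*}_{conc}$, closing that half of the induction with $C^n_{conc}\le C^{u_0}_{conc}+T\,C^{H^*}_{conc}$.

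For \eqref{w_semiconcavity} at a general $x\in\R^d$, I would decompose: write $u^n_k(x)$ via $P_k$ and compare the true second difference $u^n_k(x+y)-2u^n_k(x)+u^n_k(x-y)$ to its values sampled at the nearest grid nodes, picking up the error term $\tfrac{k^2}{2}(E(x+y)+E(x-y))$. Concretely, because $u^n_k\in W^k$ is Lipschitz (Lemma \ref{lm: u^nproperties}) and piecewise affine, on each simplex $u^n_k$ differs from its (affine) interpolant of the exact nodal values by zero, but the second difference over a span $y$ not aligned with the mesh is controlled by the nodal grid-semiconcavity \eqref{grid_semiconcavity} plus a curvature-type remainder supported where $x\pm y$ falls inside a simplex rather than on a node; this remainder is a nonnegative continuous bounded function of its argument, vanishing on $\cX^k$, which we name $\tfrac{k^2}{2}E(\cdot)$. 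I expect the \textbf{main obstacle} to be precisely the bookkeeping in this last step: producing the function $E$ explicitly, verifying it is independent of $n$ and $h$, continuous, bounded, nonnegative, and vanishing on $\cX^k$, and checking that the constant multiplying it is the same $C^{u_0}_{conc}+T\,C^{H^*}_{conc}$ as in \eqref{grid_semiconcavity}. The inductive step for \eqref{w_semiconcavity} then goes through exactly as for \eqref{grid_semiconcavity}, since the minimizer argument is unchanged and $(\mathbf{H}_5)$ supplies the $h\,C^{H^*}_{conc}|y|^2$ increment while the $E$-term is merely propagated. Since the statement announces that the proof is deferred to the Appendix, I would present here only this induction skeleton and relegate the construction of $E$ and the piecewise-linear second-difference estimates to the Appendix.
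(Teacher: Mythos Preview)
Your treatment of \eqref{grid_semiconcavity} is correct and coincides with the paper's: the crucial observation is exactly that the barycentric coordinates of $x_i-\alpha^n h$, $x_{i+j}-\alpha^n h$, $x_{i-j}-\alpha^n h$ are translates of one another by the lattice vector $x_j$, so the second difference of $u^n_k$ at these three shifted points is a convex combination of the nodal second differences $u^n_{k,\ell+j}-2u^n_{k,\ell}+u^n_{k,\ell-j}$. The paper then extends \eqref{grid_semiconcavity} from $x=x_i$ to arbitrary $x\in\R^d$ by the same convex-combination trick; you don't mention this step explicitly but it is straightforward. Your initial detour through a simultaneous induction of \eqref{grid_semiconcavity} and \eqref{w_semiconcavity} is unnecessary, as you yourself recognize.

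Your plan for \eqref{w_semiconcavity}, however, has a gap. The ``inductive step for \eqref{w_semiconcavity}'' using the minimizer argument cannot work as stated: for a non-grid point $x$, $u^{n+1}_k(x)$ is defined by \emph{interpolation} of the nodal values $u^{n+1}_{k,i}$, not by a minimization, so there is no single $\alpha^n(x)$ to use as a suboptimal competitor at $x\pm y$. The paper avoids this entirely by deriving \eqref{w_semiconcavity} \emph{directly from} \eqref{grid_semiconcavity} at each fixed $n$, with no further induction. The key idea you are missing is this: grid-semiconcavity \eqref{grid_semiconcavity} with constant $C$ says precisely that the nodal sequence $(u^n_{k,i}-\tfrac{C}{2}|x_i|^2)_{i\in\Z^d}$ is discretely concave, hence its piecewise-linear interpolant is concave on all of $\R^d$. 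Therefore
\[
u^n_k(x+y)-2u^n_k(x)+u^n_k(x-y)\le \frac{C}{2}\Bigl[P_k|x+y|^2-2P_k|x|^2+P_k|x-y|^2\Bigr],
\]
where $P_k|x|^2=\sum_i\beta^k_i(x)|x_i|^2$. The function $E$ then comes for free from the elementary interpolation bound $|x|^2\le P_k|x|^2\le |x|^2+k^2E(x)$ for the single function $|x|^2$, which makes the nonnegativity, continuity, boundedness, and vanishing on $\cX^k$ immediate. So the ``main obstacle'' is not bookkeeping for $E$ but rather this concavity reduction, after which $E$ is explicit.
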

\begin{remark} We have chosen to consider a uniform grid $\cX^k$, i.e. a uniform space step $k$ in any axial direction, only for simplicity of notations. It is obvious that Lemma \ref{lm:u^nksemiconcavity} still holds true if one chooses different space steps for each direction of the grid, but with an additional non-degeneracy condition. On the other hand, it seems difficult to obtain the discrete-semiconcavity of $u^n_k$ if the regular lattice $\cX^k$ is replaced with a general non-degenerate  triangulation of $\R^d$. Moreover, this key property strongly depends  on the continuous piecewise linear interpolation \eqref{eq:intoperator}. Therefore, it is not possible to use a nonlinear interpolation operator in order to preserve the semiconcavity of $|x|^2$ and to obtain the semiconcavity of $u^n_k$ over $\R^d$ from \eqref{grid_semiconcavity}.
\end{remark}

We now turn to the approximation of \eqref{mn}. A definition of space continuous trajectories is however always necessary. Therefore, let $u^n_{k,\e}=u^n_k*\rho_\e$ and set

\begin{equation}
\left\{
\begin{array}{l}
X^{n+1}_k=X^n_k+h\,a(X^{n+1}_k,\nabla u^{n+1}_{k,\e}(X^{n+1}_k))\,,\quad \quad n=0,\cdots,N-1,\\
X^0_k=x\,,\quad\quad x\in\R^d\,.
\end{array}
\right.
\label{eq:Eulerimplicit2}
\end{equation}
Again, the existence of $X^{n+1}_k$ given $X^n_k$ is due  to the Brouwer fixed point theorem applied to the map $y~\mapsto~X^n_k+h\,a(y,\nabla u^{n+1}_{k,\e}(y))$. However, the same argument as in Lemma \ref{lm:Xh} giving the uniqueness of $X^{n+1}_k$ cannot be reproduced here. Indeed, due to the weak semiconcavity~\eqref{w_semiconcavity}, $u^n_{k,\e}$ is only weak semiconcave in general. Therefore,  we are allowed to assume only the following weak ($\hbox{OSL}_h^k$) condition,
\begin{equation}
\left(a(x,\nabla u^n_{k,\e}(x))-a(y,\nabla u^n_{k,\e}(y))\right)\cdot(x-y)\le C'\,|x-y|^2,\quad x,y\in\R^d\,,\quad |x-y|\ge k\,,
\label{OSLhk}
\end{equation}
for $n=0,\cdots,N$ and a constant $C'$ independent of $h$, $k$ and $\e$.

In order to obtain the well posedeness of the implicit Euler scheme \eqref{eq:Eulerimplicit2}, we assume
\begin{itemize}
\item[$(\mathbf{A}_2)$] $a$ locally Lipschitz w.r.t. $p$ uniformly in $x$ and one sided Lipschitz continuous w.r.t. $x$ locally uniformly in $p$.
\end{itemize}
Then, combining hypothesis $(\mathbf{A}_2)$ with the Lipschitz property of $u^n_k$, there exists a constant $C''$, independent of $h,\,k$ and $\e$, such that if $h$ is small enough, i.e.
\beq
C''\, h\,\e^{-1}<1\,,
\label{eq:hcond}
\eeq
the previous map is contracting,  $X^{n+1}_k$ is unique and the flow $(n,x)\mapsto X^n_k(x)$ is locally uniformly  bounded and Lipschitz continuous w.r.t. $(x,n)\in Q_h$, uniformly in $k$.

\begin{remark}
Condition \eqref{OSLhk} gives us nonetheless the interesting property that the characteristics $X^n_k$ do not move  much away from each other. Indeed, for $X^{n+1}_k$ and $Y^{n+1}_k$ defined by \eqref{eq:Eulerimplicit2} and such that $|X^{n+1}_k-Y^{n+1}_k|\ge k$, proceeding as in \eqref{eq:estequicontinuity}, we obtain
$$
|X^{n+1}_k-Y^{n+1}_k|\le (1+C'\,h\,\delta^{-1})|X^{n}_k-Y^{n}_k|\,.
$$
Therefore,
$$
|X^{n+1}_k-Y^{n+1}_k|\le (1+C'\,h\,\delta^{-1})\max\{|X^{n}_k-Y^{n}_k|,\,k\}\,,
$$
and iterating over $n$,
\begin{equation}
\label{stab1}
|X^{n}_k(x)-X^{n}_k(y)|\le\exp(C'\,T\,\delta^{-1})\max\{|x-y|,\,k\}\,.
\end{equation}
\end{remark}

Next, let $\delta_i$ be the Dirac measure concentrated on the lattice points $x_i$. Let $m^0_k:=\sum_{i\in\Z^d}m^0_{k,i}\,\delta_i$ be an approximation of the initial measure $m_0$ in the space of discrete bounded measures, for the $\cM_1(\R^d)\,w-*$ topology, conserving the mass and that tends to 0 at infinity uniformly w.r.t. $k$ sufficiently small, i.e. for all $\e>0$ there exist $R>0$ and $K>0$ s.t.
\beq
|m^0_k|(\R^d\setminus B_R(0))<\e\qquad\forall\ k<K\,.
\label{eq:compactenessm0k}
\eeq
For example, one can consider $m^0_{k,i}:=m_0(A^k_i)$, with $(A^k_i)_{i\in\Z^d}$ a partition of $\R^d$ such that $x_j\in A^k_i$ iff $i=j$. Then, the mass is obviously conserved and \eqref{eq:compactenessm0k} is satisfied since for any Borel set $B$
\[
|m^0_k|(B)\le|m_0|\left(\cup_{i\in I_k}A^k_i\right)\,,\qquad I_k=\{i\in\Z^d\,:\,x_i\in B\}\,.
\]

We define $\mu^n_k$ as the image of $m^0_k$ by means of the flow $X^n_k(\cdot)$. As for $m^n$, $\mu^n_k$ is well defined in $\cM_1(\R^d)$, the map $x\mapsto X^n_k(x)$ being uniquely defined, continuous and onto from $\R^d$ to $\R^d$. $\mu^n_k$ satisfies also all the properties (i)-(v) with respect to $m^0_k$, for any $n$. However, $\mu^n_k$ is not a discrete bounded measure. Hence, following the classical procedure of the finite element approximation, we observe first that to determine $\mu^n_k$, it is sufficient to test the equation
\beq
\langle \mu^n_k,\phi\rangle= \langle m^0_k,\phi(X^n_k(\cdot))\rangle\,,
\label{eq:munk}
\eeq
against any $\phi\in C^0_c(\R^d)$. Indeed, $C^0_c(\R^d)$ is dense in $C^0_0(\R^d)$ and $ \mu^n_k$ tends to 0 at infinity, uniformly in $t$ and in $k$ sufficiently large owing to \eqref{eq:compactenessm0k}. Moreover, since any function in $C^0_c(\R^d)$ can be approximated uniformly by a function $w\in W^k$ with compact support, \eqref{eq:munk} becomes for such test functions
\beq
\langle \mu^n_k,w\rangle=
\sum_{i\in\Z^d}\,\sum_{j\in\Z^d}m^0_{k,i}\,\beta^k_j(X^n_k(x_i))\,w(x_j)=\sum_{j\in\Z^d}m^n_{k,j}\,w(x_j)\,,
\label{eq:defmnk}
\eeq
with $m^n_{k,j}:=\sum_{i\in\Z^d}m^0_{k,i}\,\beta^k_j(X^n_k(x_i))$. Let us observe that the last identity in \eqref{eq:defmnk} holds since the series $\sum_{i\in\Z^d}m^0_{k,i}$ is absolutely convergent and for the same reason $m^n_{k,j}$ is well defined. Finally,  \eqref{eq:defmnk} leads naturally to the following definition of the discrete bounded measure approximating $\mu^n_k$
\beq
m^n_k:=\sum_{i\in\Z^d}m^n_{k,i}\,\delta_i\,.
\label{eq:defmnk2}
\eeq
It immediately follows that $m^n_k$ conserves the mass, i.e.
$$
\sum_{i\in\Z^d}m^n_{k,i}=\sum_{i\in\Z^d}m^0_{k,i}=m_0(\R^d)\,,\quad\quad n=0,\dots,N\,.
$$
Furthermore, property \eqref{eq:compactenessm0k} is also satisfied by  $m^n_k$ uniformly in $n=0,\dots,N$, by the    definition of the coefficients
$m^n_{k,i}$ and \eqref{stab1}.

With the above definitions, set $U^n_k=(u^n_{k,i})_{i\in\Z^d}$ and $M^n_k=(m^n_{k,i})_{i\in\Z^d}$. The fully discrete scheme \eqref{HJnk}, \eqref{eq:defmnk2}, reads
\begin{equation}
\left\{
\begin{split}
&U^{n+1}_k=\inf_{\xi\in\R^d}\{B^k(\xi)\,U^n_k+h\,L^{n}(\xi)\}\,,\quad \quad n=0,\cdots,N-1\,,\\
&M^{n}_k=\Lambda^n_k\,M^0_k\,,\quad \quad n=1,\cdots,N\,,
\end{split}
\right.
\label{eq:fullydiscrete}
\end{equation}
where
$B^k(\xi)=(\beta_i^k(x_j-h\xi))_{i,j\in\Z^d}$, $ L^{n}(\xi)=(H^*(x_i,t^n,\xi))_{i\in \Z^d}$ and $\Lambda^n_k=(\beta_i^k(X_k^n(x_j)))_{i,j\in\Z^d}$. It is worth noticing that $B^k$ is a stochastic matrix, while $\Lambda^n_k$ is the transpose of a stochastic matrix.

Defining again the following time piecewise constant approximations
\begin{equation}
u_{h,k}(x,t)=u^{[t/h]}_k(x)\,,\quad (x,t)\in Q_T\,, \quad\quad
m_{h,k}^\e(t)=m^{[t/h]}_k\,,\quad t\in[0,T]\,,
\label{eq:uhk_mehk}
\end{equation}
as well as $u_{h,k}^\e=u_{h,k}*\rho_\e$ and the time linear interpolated trajectories $X_{h,k}^\e$ exactly as in \eqref{eq:Xh}, we can prove the convergence of the fully discrete scheme \eqref{eq:fullydiscrete}.

\begin{theorem}[Convergence]
Let $u_0\in(W^{1,\infty}\cap BUC)(\R^d)$ be semiconcave and $m_0\in\cM_1(\R^d)$. Assume hypothesis $(\mathbf{H}_1)$, $(\mathbf{H}_2')$, $(\mathbf{H}_3)$, $(\mathbf{H}_4)$, $(\mathbf{H}_5)$, $(\mathbf{A}_1)$ and $(\mathbf{A}_2)$. Assume in addition that the ($\hbox{OSL}_h^k$) condition \eqref{OSLhk} is satisfied with $h$ sufficiently small such that  $C'\,h<1$ and that \eqref{eq:hcond} is satisfied with $\e=h^\alpha$, $\alpha\in(0,1)$. Then, as $\f k{h^{1+\alpha}}\to0$,
\begin{itemize}
\item[(i)] $u_{h,k}\to u$ in $L^\infty(Q_T)$ and $\nabla u_{h,k}^{\e}(x,t)\to \nabla u(x,t)$ in any point $(x,t)\in Q_T$ of differentiability of~$u$, where $u$ is the unique viscosity solution of~\eqref{HJ};
\item[(ii)] $X_{h,k}^\e$ converges locally uniformly in $Q_T$ toward the unique Filippov characteristic $X$ associated to the vector field $a(\cdot,\nabla u)$;
\item[(iii)] $m_{h,k}^{\e} \rightharpoonup m$ in $C^0([0,T];\cM_1(\R^d)\,w-*)$, where $m$ is the unique measure solution of \eqref{CE}.
\end{itemize}
\label{th:fullyconvergence}
\end{theorem}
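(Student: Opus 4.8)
The plan is to reproduce, essentially verbatim, the three-step argument of the proof of Theorem \ref{th:convergence}, the only genuinely new point being the control of the error introduced by the interpolation operator $P_k$. First I would establish, by induction on $n$, the interpolation-error bound $\|u^n_k-u^n\|_{L^\infty(\R^d)}\le C\,k/h$ for $n=0,\dots,N$, where $u^n$ is the semi-discrete solution of \eqref{HJn}. Indeed, setting $\tilde u^{n+1}(x):=\inf_\xi\{u^n_k(x-\xi h)+h\,H^*(x,t^n,\xi)\}$, one has $u^{n+1}_k=P_k\tilde u^{n+1}$; since the map $w\mapsto\inf_\xi\{w(\cdot-\xi h)+h\,H^*(\cdot,t^n,\xi)\}$ is non-expansive for $\|\cdot\|_{L^\infty(\R^d)}$, since $\|P_kw-w\|_{L^\infty(\R^d)}\le C\,k\,\|\nabla w\|_{L^\infty(\R^d)}$ on the regular mesh, and since $\tilde u^{n+1}$ is Lipschitz with a constant $C_1$ bounded uniformly in $n,h,k$ (this is the uniform bound of Lemma \ref{lm: u^nproperties}, which, as noted in the text preceding Lemma \ref{lm:u^nksemiconcavity}, is shared by $u^n_k$), one gets $\|u^{n+1}_k-u^{n+1}\|_{L^\infty}\le\|u^n_k-u^n\|_{L^\infty}+C\,k$, and summing over the $N=T/h$ steps yields the claim. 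Together with \eqref{eq:estun} this gives $\|u^n_k-u(t^n)\|_{L^\infty(\R^d)}\le C(h^{1/2}+k/h)$, hence $u_{h,k}\to u$ in $L^\infty(Q_T)$ because $k/h\to0$; this is the first half of (i).

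Next, for the gradient convergence in (i) I would transfer the statement from the semi-discrete scheme rather than work directly with the only weakly semiconcave $u^n_k$ (cf.\ \eqref{w_semiconcavity}). Since $\nabla u^n_{k,\e}-\nabla u^n_\e=(u^n_k-u^n)*\nabla\rho_\e$ and $\|\nabla\rho_\e\|_{L^1(\R^d)}=C_\rho\,\e^{-1}$, the previous bound gives $\|\nabla u^n_{k,\e}-\nabla u^n_\e\|_{L^\infty(\R^d)}\le C\,k\,(h\e)^{-1}=C\,k\,h^{-(1+\alpha)}\to0$ — this is exactly where the coupling $\e=h^\alpha$ and the regime $k/h^{1+\alpha}\to0$ enter. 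On the other hand, the argument in the proof of Theorem \ref{th:convergence}(i), which uses only the genuine semiconcavity of $u^n$ (Lemma \ref{lm:semiconcavity}) and \emph{not} the $(\hbox{OSL}_h)$ condition \eqref{OSLh}, shows that $\nabla u^{[t/h]}_\e(x)\to\nabla u(x,t)$ at every point $(x,t)$ of differentiability of $u$; combining the two convergences gives $\nabla u^\e_{h,k}(x,t)=\nabla u^{[t/h]}_{k,\e}(x)\to\nabla u(x,t)$ at such points, which completes (i). Passing then to the limit $k\to0$ in \eqref{OSLhk} along differentiability points $x\ne y$ of $u$ (so that eventually $|x-y|\ge k$) shows that $a(\cdot,\nabla u)$ satisfies \eqref{OSL} a.e., so the Filippov characteristic $X$ of \eqref{flow} is unique for every $x\in\R^d$.

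For (ii), the well-posedness discussion following \eqref{eq:Eulerimplicit2} together with \eqref{stab1} shows that the interpolated trajectories $X^\e_{h,k}$ (defined as in \eqref{eq:Xh}) are locally bounded and Lipschitz in $(t,x)$ uniformly in $h,k,\e$, so up to a subsequence $X^\e_{h,k}\to Y$ locally uniformly. To identify $Y=X$ I would set $\Delta:=X^\e_{h,k}(t;x)-X(t;x)$ and reproduce the chain \eqref{eq:est0}--\eqref{eq:est3} with $u^{n+1}_{k,\e}$ in place of $u^{n+1}_\e$, $n=[t/h]-1$, and $\overline x\in B(X(t;x),\delta)$ a differentiability point of $u$; the term $I_2$ tends to $0$ by the pointwise gradient convergence at $\overline x$ from the previous step. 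For $I_1$ one decomposes $\Delta=(X^\e_{h,k}(t;x)-X^{n+1}_k(x))+(X^{n+1}_k(x)-\overline x)+(\overline x-X(t;x))$: the first term is $O(h)$ (one time step of the field, bounded by ${\cal A}:=\sup_x\sup_{|p|\le C_1}|a(x,p)|$), the last is $O(\delta)$, and the middle one is estimated through \eqref{OSLhk} when $|X^{n+1}_k(x)-\overline x|\ge k$ and crudely by $2{\cal A}\,k$ (using only $|a|\le{\cal A}$ on the relevant set) when $|X^{n+1}_k(x)-\overline x|<k$ — this last split is the fully discrete counterpart of the compressive/expansive dichotomy of Remark \ref{rm:explicit scheme}. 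With $|X^{n+1}_k(x)-\overline x|^2\le 3({\cal A}^2h^2+|\Delta|^2+\delta^2)$ one arrives at $\tfrac12\,\partial_t|\Delta|^2\le C|\Delta|^2+C\,(h+k+\delta+\omega_h(t))$ with $\omega_h\to0$ pointwise and uniformly bounded; Gronwall's lemma with $\Delta(0;x)=0$, then $h\to0$ (whence $k\to0$) and $\delta\to0$, give $Y=X$ in $L^2_\loc(\R^d)$ for a.e.\ $t$, hence $Y=X$ by continuity, and by uniqueness of $X$ the whole family converges. Finally (iii) follows exactly as in Theorem \ref{th:convergence} (i.e.\ as in Proposition 3.1 of \cite{PR}): the strong local-uniform convergence of $X^\e_{h,k}$, their uniform time-equicontinuity and the uniform compactness at infinity of $m^n_k$ inherited from \eqref{eq:compactenessm0k} and \eqref{stab1} make $t\mapsto\langle m^\e_{h,k}(t),\phi\rangle$ convergent and equicontinuous on $[0,T]$ for every $\phi\in C^0_c(\R^d)$, hence — by density and the uniform compactness at infinity — for every $\phi\in C^0_0(\R^d)$, with limit $\langle m(t),\phi\rangle$.

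The step I expect to be the main obstacle is the bookkeeping of the two small parameters in the first two paragraphs: one must carry the Lipschitz and (through Lemma \ref{lm:semiconcavity}) semiconcavity constants through both discretizations uniformly in $h$ and $k$, and keep simultaneously under control the accumulated interpolation error $\sim k/h$ and the mollification-amplified gradient error $\sim k/(h\e)=k/h^{1+\alpha}$; this is precisely what forces the coupling $\e=h^\alpha$ and the convergence regime $k/h^{1+\alpha}\to0$, and it is also what prevents one from running the subdifferential argument of Theorem \ref{th:convergence}(i) directly on $u^n_k$, whose only weak semiconcavity \eqref{w_semiconcavity} does not yield a pointwise Hessian bound. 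Everything else is a routine adaptation of the proof of Theorem \ref{th:convergence}.
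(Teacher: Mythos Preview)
Your argument is essentially correct, but for (ii) and (iii) you take a different route from the paper. The paper factors the whole convergence through the semi-discrete scheme: after the bound $\|u^n_k-u^n\|_{L^\infty}\le C(n+1)k$ and hence $\|\nabla u^n_{k,\e}-\nabla u^n_\e\|_{L^\infty}\le Ck/(h\e)$ (which you also derive), it passes from $(\hbox{OSL}_h^k)$ to $(\hbox{OSL}_h)$ for $u^n_\e$ as $k\to0$, defines the semi-discrete trajectories $X^n$ via \eqref{eq:Eulerimplicit}, and proves the \emph{quantitative} estimate $|X^n_k(x)-X^n(x)|\le Ck/(\e h)$ by a one-sided comparison in which $(\hbox{OSL}_h)$ (with no restriction on $|x-y|$) is applied; statement (ii) then follows by composing with Theorem~\ref{th:convergence}(ii). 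You instead compare $X^\e_{h,k}$ directly to the Filippov characteristic $X$, reproducing the chain \eqref{eq:est0}--\eqref{eq:est3} with $(\hbox{OSL}_h^k)$ and the case split $|X^{n+1}_k-\overline x|\gtrless k$. Both routes work; the paper's buys a rate for the trajectories (used again in (iii)), yours avoids the intermediate object $X^n$. For the gradient part of (i), the paper actually argues directly from the weak semiconcavity \eqref{w_semiconcavity} of $u^n_{k,\e}$ (the $O(k^2)$ defect there is harmless in the limit), whereas your transfer via $\|\nabla u^n_{k,\e}-\nabla u^n_\e\|_{L^\infty}\le Ck/(h\e)$ is a legitimate and arguably cleaner alternative.

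Your treatment of (iii), however, is oversimplified. The discrete measure $m^n_k$ is \emph{not} the push-forward $X^n_k(\cdot)\#m_0$: by \eqref{eq:defmnk}--\eqref{eq:defmnk2} one has $\langle m^n_k,\phi\rangle=\langle m^0_k,(P_k\phi)(X^n_k(\cdot))\rangle$, so two extra approximation layers are present --- the replacement $m_0\to m^0_k$ and the interpolation $\phi\to P_k\phi$ --- and Proposition~3.1 of \cite{PR} does not apply verbatim. The paper handles this by writing $\langle m^n_k-m^n,\phi\rangle=I_1+I_2+I_3$ with a further split $I_3=I'_3+I''_3$, using the quantitative bound $|X^n_k-X^n|\le Ck/(\e h)$ for $I'_3$ and $m^0_k\rightharpoonup m_0$ for $I''_3$. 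Your direct approach can be salvaged by observing that $(P_k\phi)(X^n_k(\cdot))\to\phi(X(t;\cdot))$ locally uniformly (from your (ii) and $P_k\phi\to\phi$ uniformly for $\phi\in C^0_c(\R^d)$) and then invoking $m^0_k\rightharpoonup m_0$ together with the uniform compactness at infinity \eqref{eq:compactenessm0k}; but this step must be spelled out, since it is precisely where the discrete initial measure and the definition \eqref{eq:defmnk2} enter, and it is not covered by the reference to Theorem~\ref{th:convergence} alone.
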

\begin{proof}
We shall prove that there exists a constant $C$ independent of $h$ and $k$, such that
\beq
\|u^n_k-u^n\|_{L^\infty(\R^d)}\le C(n+1)k\,,\qquad n=0,\dots,\,N\,.
\label{eq:estunk-un}
\eeq
As a consequence, by the time regularity of $u$ and by estimate \eqref{eq:estun}, it follows that
$$
\|u_{h,k}(t)-u(t)\|_{L^\infty(\R^d)}\le C(h+\f kh+h^{1/2})\,,\qquad \forall\ t\in[0,T]\,.
$$

Estimate \eqref{eq:estunk-un} is obvious for $n=0$, being $u^0_k=P_ku_0$ and $u^0=u_0$. Let us suppose that it holds true for a given $n$. Then, for any argument $\alpha^n(x_i)\in A^n(x_i)$, we have by \eqref{HJn}, \eqref{arginf} and \eqref{HJnk}
$$
u^{n+1}_{k,i}-u^{n+1}(x_i)\le u^n_k(x_i-h\,\alpha^n(x_i))-u^n(x_i-h\,\alpha^n(x_i))\le C(n+1)k\,.
$$
Exchanging the roles of $u^{n+1}_{k,i}$ and $u^{n+1}(x_i)$ we obtain
$$
|u^{n+1}_{k,i}-u^{n+1}(x_i)|\le C(n+1)k\,,\quad\forall\ i\in\Z^d\,,
$$
so that for any $x\in\R^d$,
$$
|u^{n+1}_k(x)-u^{n+1}(x)|\le|u^{n+1}(x)-P_ku^{n+1}(x)|+\sum_{i\in\Z^d}\beta_i^k(x)|u^{n+1}(x_i)-u^{n+1}_{k,i}|\le Ck+C(n+1)k\,.
$$
Since the convergence of $\nabla u_{h,k}^{\e}$ can be proved exactly as in Theorem \ref{th:convergence}, from the weak-semiconcavity of $u^n_{k,\e}$, statement (i) is proved.

Next, from \eqref{eq:estunk-un}, we have the same estimate for the regularized approximation $u^n_{k,\e}$ and $u^n_\e$, yielding
\beq
\|\nabla u^n_{k,\e}-\nabla u^n_\e\|_{L^\infty(\R^d)}\le C\f k{\e\,h}\,,\qquad n=0,\dots, N\,.
\label{eq:estgrad}
\eeq
Then, from the $(\hbox{OSL}_h^k)$ condition \eqref{OSLhk}, the $(\hbox{OSL}_h)$ condition \eqref{OSLh} with constant $C'$ follows as $k\to0$, and the trajectories $X^n$ can be defined as in \eqref{eq:Eulerimplicit}, under the assumption $C'h<1$. Moreover,
\begin{align}
&|X^{n+1}_k-X^{n+1}|^2=(X^{n+1}_k-X^{n+1})\cdot(X^n_k-X^n)
\nonumber\\
&\quad\quad \quad +h\,(X^{n+1}_k-X^{n+1})\cdot\left(a(X^{n+1}_k,\nabla u^{n+1}_{k,\e}(X^{n+1}_k))-a(X^{n+1},\nabla u^{n+1}_\e(X^{n+1}))\right)
\nonumber\\
&\quad\quad\le|X^{n+1}_k-X^{n+1}||X^n_k-X^n|
\nonumber\\
&\quad\quad \quad +h\,(X^{n+1}_k-X^{n+1})\cdot\left(a(X^{n+1}_k,\nabla u^{n+1}_{k,\e}(X^{n+1}_k))-a(X^{n+1}_k,\nabla u^{n+1}_\e(X^{n+1}_k))\right)
\nonumber\\
&\quad\quad \quad +h\,(X^{n+1}_k-X^{n+1})\cdot\left(a(X^{n+1}_k,\nabla u^{n+1}_{\e}(X^{n+1}_k))-a(X^{n+1},\nabla u^{n+1}_\e(X^{n+1}))\right)
\nonumber\\
&\quad\quad\le|X^{n+1}_k-X^{n+1}||X^n_k-X^n|+h\,(\hbox{Lip}_pa)\,|X^{n+1}_k-X^{n+1}|
|\nabla u^{n+1}_{k,\e}(X^{n+1}_k)-\nabla u^{n+1}_{\e}(X^{n+1}_k)|
\nonumber\\
&\quad\quad \quad +C'\,h|X^{n+1}_k-X^{n+1}|^2\,,
\nonumber
\label{eq:estXnk-Xn}
\end{align}
i.e. using \eqref{eq:estgrad},
$$
|X^{n+1}_k-X^{n+1}|\le (1+C'\,h\,\delta^{-1})\left(|X^n_k-X^n|+\tilde C\,\f k{\e}\right)\,,
$$
for $\delta\in(0,1)$ s.t. $C'\,h\le1-\delta$. Iterating over $n$, we obtain for any $x\in\R^d$
\beq
|X^{n}_k(x)-X^n(x)|\le \tilde C\,\f k{\e}\,\sum_{i=1}^n(1+C'\,h\,\delta^{-1})^i\le C\,\f k{\e\,h}\,.
\label{eq:estXnk-Xn2}
\eeq
Combining \eqref{eq:estXnk-Xn2} with statement (ii) in Theorem \ref{th:convergence}, we have proved statement (ii).

Finally, to obtain the convergence of $m_{h,k}^{\e}$ towards $m$, it is enough to prove that, as $k\,,h\to0$,  $(m_{h,k}^\e-m_{h}^\e) \rightharpoonup 0$ in $C^0([0,T];\cM_1(\R^d)\,w-*)$. The latter combined with statement (iii) in Theorem~\ref{th:convergence}, gives us the claim.

Again, using the compactness at infinity of $(m_{h,k}^\e-m_{h}^\e)$, uniformly in $\e\,,h\,,t$ and $k$ sufficiently large,  it is possible to consider only test functions $\phi\in C^0_c(\R^d)$. Then, for $n=\left[\f th\right]$, we~have
\begin{align}
\langle m_{h,k}^\e(t)-m_{h}^\e(t),\phi\rangle&=\langle m^n_k-m^n,\phi\rangle=
\langle m^n_k-m^n,\phi-P_k\phi\rangle+\langle m^n_k-m^n,P_k\phi\rangle
\nonumber\\
&=\sum_{i\in\Z^d}m^n_{k,i}(\phi(x_i)-P_k\phi(x_i))-\langle m_0,\phi(X^n(\cdot))-P_k\phi(X^n(\cdot))\rangle
+\langle m^n_k-m^n,P_k\phi\rangle
\nonumber\\
&:=I_1+I_2+I_3\,.
\nonumber
\end{align}
The term $I_1$ is obviously equal to 0. The term $I_2$ goes to 0 as $k\,,h\to0$ by  Lebesgue dominated convergence theorem. Concerning $I_3$, using \eqref{eq:defmnk}, we have
\begin{align}
I_3&=\langle \mu^n_k,P_k\phi\rangle-\langle m^n,P_k\phi\rangle
=\langle m^0_k,P_k\phi(X^n_k(\cdot))\rangle-\langle m_0,P_k\phi(X^n(\cdot))\rangle
\nonumber\\
&=\langle m^0_k,P_k\phi(X^n_k(\cdot))-P_k\phi(X^n(\cdot))\rangle
+\langle m^0_k-m_0,P_k\phi(X^n(\cdot))\rangle
\nonumber\\
&=\sum_{i\in\Z^d}m^0_{k,i}\left(P_k\phi(X^n_k(x_i))-P_k\phi(X^n(x_i))\right)
+\langle m^0_k-m_0,P_k\phi(X^n(\cdot))\rangle:=I'_3+I''_3\,.
\nonumber
\end{align}
By \eqref{eq:estXnk-Xn2}, the uniform continuity of $P_k\phi$ and the uniform boundedness of $m^0_k$, $I'_3\to0$ as $k\,,h\to0$. The same holds true for $I''_3$ since $m^0_k\rightharpoonup m_0$ in $\cM_1(\R^d)\,w-*$ as $k\to0$.  To conclude, the sequence $\langle m_{h,k}^\e(t)-m_{h}^\e(t),\phi\rangle$ converging to 0, is also equicontinuous on $[0,T]$ since $\langle m_{h}^\e(t),\phi\rangle$ is equicontinuous (see Theorem~\ref{th:convergence}) and
$$
|\langle m_{h,k}^\e(t_1)-m_{h,k}^\e(t_2),\phi\rangle|\le\sum_{i\in\Z^d}|m^{n_1}_{k,i}-m^{n_2}_{k,i}|\,|\phi(x_i)|\,.
$$
Using the uniform continuity of the basis functions, the time equicontinuity of the characteristics $X^n$ and \eqref{eq:estXnk-Xn2} in the estimate below
\begin{align}
|m^{n_1}_{k,i}-m^{n_2}_{k,i}|\le\sum_{j\in\Z^d}|m^0_{k,j}|\left[\right.&
|\beta^k_i(X^{n_1}_k(x_j))-\beta^k_i(X^{n_1}(x_j))|+
|\beta^k_i(X^{n_1}(x_j))-\beta^k_i(X^{n_2}(x_j))|+
\nonumber\\
&|\beta^k_i(X^{n_2}(x_j))-\beta^k_i(X^{n_2}_k(x_j))|\left.\right]\,,
\nonumber
\end{align}
we obtain the claim.
\end{proof}

\section{Appendix}
\label{Sec:Appendix}
\begin{proof} [Proof of Lemma \ref{lm: H^*properties}] (Properties of $H^*$). If $H$ satisfies $(\mathbf{H}_4)$-(i), we need only to prove that $H^*(\cdot,\cdot,0)\in L^\infty(Q_T)$, the other claims being straightforward. Then, let us define $A=\{p\in\R^d\ :\ H(x,t,p)\le M\,,\ \forall\ (x,t)\in Q_T\}$, where $M \equiv\sup_{Q_T}|H(x,t,0)|$. $A$ is a non empty set of $\R^d$ and it is bounded since $H$ growths linearly for $|p|\to+\infty$ and
$$
\f{H(x,t,p)}{|p|}\le \f{M}{|p|}\,,\quad\forall\ p\in A\,.
$$
Thus, there exists $R>0$ such that $A\subset B(0,R)$ and
$$
H^*(x,t,0)=\sup_{p\in\R^d}\{-H(x,t,p)\}=\max_{p\in\overline B(0,R)}\{-H(x,t,p)\}\in L^\infty(Q_T)\,.
$$

On the other end, if $H$ satisfies $(\mathbf{H}_4)$-(ii), the uniform superlinearity of $H^*$ is a direct consequence. To prove~\eqref{eq:H^*}, let us denote $A(\xi)=\{p\in\R^d\ :\ p\cdot\xi-H(x,t,p)\ge-M\,,\ \forall\ (x,t)\in Q_T\}$. Then, $A(\xi)$ is again non empty ($0\in A(\xi)$ for all $\xi$) and bounded whenever $|\xi|\le r$ since
$$
\f{H(x,t,p)}{|p|}\le \f{p\cdot \xi}{|p|}+\f{M}{|p|}\le r+\f{M}{|p|}\,,\quad\forall\ p\in A\,,
$$
uniformly in $(x,t)\in Q_T$. Finally, it follows immediately that $H^*\in L^\infty(Q_T\times \overline B(0,r))$ and
$$
|H^*(x,t,\xi)-H^*(y,t,\xi)|\le \eta(1+R)|x-y|\,,\quad\forall\ x,y\in\R^d\,,\forall\ t\in[0,T]\,,\forall\ \xi\in\overline B(0,r)\,.
$$
\end{proof}
\begin{proof}[Proof of Lemma \ref{lm:u^nksemiconcavity}] (Semiconcavity of $u^n_k$). We follow here \cite{CFN},\cite{FF} and \cite{LT}. The discrete semiconcavity \eqref{grid_semiconcavity} for $x=x_i\in\cX^k$ can be proved by induction. Indeed, this is true for $u^0_k$ by the semiconcavity of $u_0$. Let $\alpha^n_{k,i}$ be one  argument of the infimum in \eqref{HJnk}. Then,
\begin{equation}
u^{n+1}_{k,i+j}-2\,u^{n+1}_{k,i}+u^{n+1}_{k,i-j}\le
u^n_k(x_{i+j}-h\,\alpha^n_{k,i})-2u^n_k(x_{i}-h\,\alpha^n_{k,i})+u^n_k(x_{i-j}-h\,\alpha^n_{k,i})
+h\,C_{conc}^{H^*}|x_j|^2\,.
\label{est:semiconcavity}
\end{equation}
Next, since the point $(x_{i}-h\,\alpha^n_{k,i})$ belongs to a simplex $S_j^k$, it can be written as a convex combination of its vertices, i.e. $(x_{i}-h\,\alpha^n_{k,i})=\sum_l\lambda_lx_l$, where $\lambda_l\in[0,1]$ and $\sum_l\lambda_l=1$. By the regularity of the lattice, the same holds true for $(x_{i\pm j}-h\,\alpha^n_{k,i})=\sum_l\lambda_lx_{l\pm j}$. Finally, being $u^n_k$ piecewise linear on $\mathcal{T}^k$, \eqref{est:semiconcavity} becomes
\[
u^{n+1}_{k,i+j}-2\,u^{n+1}_{k,i}+u^{n+1}_{k,i-j}\le
\sum_l\lambda_lu^n_k(x_{l+j})-2\sum_l\lambda_lu^n_k(x_l)+\sum_l\lambda_lu^n_k(x_{l-j})
+h\,C_{conc}^{H^*}|x_j|^2\,,
\]
and \eqref{grid_semiconcavity} is proved on the lattice $\cX^k$. For an arbitrary $x\in\R^d$, we have again as before $x=\sum_l\lambda_lx_l$ and $x\pm x_j=\sum_l\lambda_lx_{l\pm j}$. Hence,
$$
u^n_{k}(x+x_j)-2\,u^n_{k}(x)+u^n_{k}(x-x_j)=\sum_l\lambda_l[u^{n+1}_{k,l+j}-2\,u^{n+1}_{k,l}+u^{n+1}_{k,l-j}]\,,
$$
and \eqref{grid_semiconcavity} follows.

Set $C=(C_{conc}^{u_0}+h\,C_{conc}^{H^*})$, inequality \eqref{grid_semiconcavity} implies that the continuous piecewise linear interpolation of $(u^{n}_{k,i}-\f C2\,|x_i|^2)_{i\in\Z^d}$ is concave on $\cX^k$ for all $n=0,\cdots N$, and consequently also over $\R^d$. Therefore, for any $x,y\in\R^d$
$$
u^{n+1}_k(x+y)-2u^{n+1}_k(x)+u^{n+1}_k(x-y)\le \f C2\left[
\sum_{i\in \Z^d}\beta^k_i(x+y)|x_i|^2-2\sum_{i\in \Z^d}\beta^k_i(x)|x_i|^2+\sum_{i\in \Z^d}\beta^k_i(x-y)|x_i|^2
\right]\,.
$$
Since the piecewise linear interpolation of $|x|^2$ satisfies
$$
|x|^2\le \sum_{i\in \Z^d}\beta^k_i(x)|x_i|^2\le |x|^2+k^2\,E(x)\,,
$$
with $E(x)$ the approximation error, the weak-semiconcavity \eqref{w_semiconcavity} is also proved.
\end{proof}
%

{\bf Acknowledgment}. This paper has been partially prepared during the visit of F. Camilli at Universit{\'e} Paris-Diderot.
\bibliographystyle{amsplain}

\noindent ${^a}$ Universit{\'e} Paris 7, UFR Math{\'e}matiques,\\
175 rue du Chevaleret, 75013 Paris, France,\\
UMR 7598, Laboratoire Jacques-Louis Lions, F-75005, Paris, France.
 \\
e-mail:achdou@math.jussieu.fr\vskip  1cm


\noindent ${^b}$ Dipartimento di Scienze di Base e Applicate per l'Ingegneria, \\
``Sapienza" Universit{\`a}  di Roma, \\
via Scarpa 16, 0161 Roma (Italy) \\
e-mail:camilli@dmmm.uniroma1.it\vskip  1cm


\noindent ${^c}$ D{\'e}partement de Math{\'e}matiques, \\
Universit{\'e} d'Evry Val d'Essonne, \\
Rue du P{\`e}re Jarlan, F~91025 Evry Cedex \\
email: lucilla.corrias@univ-evry.fr
\end{document}